\newtheorem{theorem}{Theorem}[section]
\newtheorem{lemma}[theorem]{Lemma}
\newtheorem{remark}{Remark}
\newcommand{\al}{\alpha}
\newcommand{\bt}{\beta}
\newcommand{\s}{\sigma}
\newcommand{\be}{\begin{equation}}
\newcommand{\ee}{\end{equation}}
\newcommand{\bea}{\begin{eqnarray}}
\newcommand{\eea}{\end{eqnarray}}
\numberwithin{equation}{section}
\begin{document}

\title{Differential, Difference and Asymptotic Relations for Pollaczek-Jacobi Type Orthogonal Polynomials and Their Hankel Determinants\renewcommand{\thefootnote}{}\footnote{Submitted on August 6, 2020; Accepted on April 5, 2021; To appear in \textit{Studies in Applied Mathematics}, 2021, https://doi.org/10.1111/sapm.12392}}
\author{Chao Min and Yang Chen}


\date{\today}
\maketitle
\begin{abstract}
In this paper, we study the orthogonal polynomials with respect to a singularly perturbed Pollaczek-Jacobi type weight
$$
w(x,t):=(1-x^2)^\alpha\mathrm{e}^{-\frac{t}{1-x^{2}}},\qquad x\in[-1,1],\;\;\alpha>0,\;\;t>0.
$$
By using the ladder operator approach, we establish the second-order difference equations satisfied by the recurrence coefficient $\beta_n(t)$ and the sub-leading coefficient $\mathrm{p}(n,t)$ of the monic orthogonal polynomials, respectively. We show that the logarithmic derivative of $\beta_n(t)$ can be expressed in terms of a particular Painlev\'{e} V transcendent. The large $n$ asymptotic expansions of $\beta_n(t)$ and $\mathrm{p}(n,t)$ are obtained by using Dyson's Coulomb fluid method together with the related difference equations.

Furthermore, we study the associated Hankel determinant $D_n(t)$ and show that a quantity $\sigma_n(t)$, allied to the logarithmic derivative of $D_n(t)$, can be expressed in terms of the $\sigma$-function of a particular Painlev\'{e} V. The second-order differential and difference equations for $\sigma_n(t)$ are also obtained. In the end, we derive the large $n$ asymptotics of $\sigma_n(t)$ and $D_n(t)$ from their relations with $\beta_n(t)$ and $\mathrm{p}(n,t)$.
\end{abstract}

$\mathbf{Keywords}$: Orthogonal polynomials; Hankel determinant; Pollaczek-Jacobi type weight;

Ladder operators; Painlev\'{e} V; Asymptotic expansions.

$\mathbf{Mathematics\:\: Subject\:\: Classification\:\: 2010}$: 42C05, 33E17, 41A60.

\section{Introduction}
Orthogonal polynomials are of great importance in mathematical physics, random matrix theory, approximation theory, etc. For orthogonal polynomials with weight $w(x)$ supported on $[-1,1]$ and satisfying the Szeg\"{o} condition
\be\label{sz}
\int_{-1}^{1}\frac{\ln w(x)}{\sqrt{1-x^2}}dx>-\infty,
\ee
the classical theory of Szeg\"{o} \cite{Szego} gives a comprehensive description of the large $n$ behavior of the recurrence coefficients and the polynomials.

A weight satisfying the condition (\ref{sz}) is often said to be of the Szeg\"{o} class. The Jacobi weight, $w(x)=(1-x)^\al(1+x)^\bt,\; x\in[-1,1],\; \al,\; \bt>-1$, is a typical example of the Szeg\"{o} class. Kuijlaars et al. \cite{Kuij} considered a modified Jacobi weight
$$
w(x)=(1-x)^\al(1+x)^\bt h(x),\qquad x\in[-1,1],\;\;\; \al,\; \bt>-1,
$$
where $h(x)$ is real analytic and strictly positive on $[-1,1]$. They obtained the large $n$ asymptotics of the orthogonal polynomials, the recurrence coefficients and the associated Hankel determinant by using the steepest descent analysis for Riemann-Hilbert problems.

Zeng, Xu and Zhao \cite{Zeng} studied the asymptotic behavior of the leading coefficients and the recurrence coefficients of the orthonormal polynomials and the Hankel determinant associated with the perturbed Jacobi weight
$$
w(x)=(1-x^2)^\bt(t^2-x^2)^\al h(x),\qquad x\in[-1,1],\;\;\;\bt>-1,\; \al+\bt>-1,\; t>1,
$$
in the sense of a double scaling limit as $n\rightarrow\infty$ and $t\rightarrow 1$. Here the function $h(x)$ satisfies the same condition as above.


However, there are some weights that violate the Szeg\"{o} condition. For example, Chen and Dai \cite{ChenDai} considered a Pollaczek-Jacobi type weight
\be\label{w1}
w(x)=x^\al(1-x)^\bt \mathrm{e}^{-\frac{t}{x}},\qquad x\in [0,1],\;\;\;\al,\;\bt>0,\;t\geq 0,
\ee
and showed that the logarithmic derivative of the Hankel determinant satisfies the Jimbo-Miwa-Okamoto $\s$-form of a particular Painlev\'{e} V. Later, Chen et al. \cite{CCF2019} studied the asymptotic behavior of the orthogonal polynomials, the recurrence coefficients and associated Hankel determinant under a suitable double scaling.

Very recently, by using the Riemann-Hilbert approach, Wang and Fan \cite{Wang} studied the large $n$ asymptotics of the monic orthogonal polynomials with respect to another singularly perturbed Pollaczek-Jacobi type weight
$$
w(x)=x^\al(1-x)^\bt \mathrm{e}^{-\frac{t}{x(1-x)}},\qquad x\in [0,1],\;\;\;\al,\;\bt>0,\;t\geq 0.
$$
Compared to the weight (\ref{w1}), this weight has one more singularity at the edge.
But they did not consider the asymptotics of the recurrence coefficients and the Hankel determinant.

Orthogonal polynomials and Hankel determinants with the singularly perturbed weights have attracted a lot of interests over the past few years, due to the applications in random matrix theory; see \cite{Atkin,BMM,Dai2019,MLC,Xu2015,Xu2016} for reference. This is because Hankel determinants are closely related to the partition functions of the unitary ensembles. The asymptotics of the partition functions usually can be expressed in terms of a particular solution of the Painlev\'{e} equations. In addition, the weights with jump discontinuities and Fisher-Hartwig singularities have also been studied in recent years; see, e.g., \cite{Charlier,CD,Min2019,Min2020,Wu}. See also \cite{DIK} on the asymptotics of Toeplitz, Hankel, and Toeplitz+Hankel determinants with Fisher-Hartwig singularities.

In this paper, we consider the following symmetric Pollaczek-Jacobi type weight with two\\ singularities at the edge, namely,
\be\label{wei}
w(x,t):=(1-x^2)^\al\mathrm{e}^{-\frac{t}{1-x^{2}}},\qquad x\in[-1,1],\;\;\al>0,\;\;t> 0.
\ee
It is easy to see that this weight vanishes infinitely fast at $x=\pm 1$.

Our main purpose is to obtain the large $n$ asymptotic expansions of the recurrence coefficients and the sub-leading coefficients of the monic orthogonal polynomials, and the associated Hankel determinant. We also would like to establish the relation of our problem with the Painlev\'{e} equations in the finite $n$ situation.

Let $P_{n}(x,t),\; n=0,1,2,\ldots,$ be the monic polynomials of degree $n$ orthogonal with respect to the weight (\ref{wei}), i.e.,
\be\label{or}
\int_{-1}^{1}P_{m}(x,t)P_{n}(x,t)w(x,t)dx=h_{n}(t)\delta_{mn},\qquad m, n=0,1,2,\ldots.
\ee
Since the weight $w(x,t)$ is even, we have $P_{n}(-x,t)=(-1)^nP_{n}(x,t)$; see \cite[p. 21]{Chihara}. Then $P_{n}(x,t)$ has the following monomial expansion,
\be\label{expan}
P_{n}(x,t)=x^{n}+\mathrm{p}(n,t)x^{n-2}+\cdots,\qquad n=0,1,2,\ldots,
\ee
Here $\mathrm{p}(n,t)$ denotes the coefficient of $x^{n-2}$, and we will see that it plays a significant role in our problem. We set the initial values of $\mathrm{p}(n,t)$ to be $\mathrm{p}(0,t)=0,\; \mathrm{p}(1,t)=0$.

The orthogonal polynomials $P_{n}(x,t),\; n=0,1,2,\ldots,$ satisfy the following three-term recurrence relation \cite[p. 18-21]{Chihara}
\be\label{rr}
xP_{n}(x,t)=P_{n+1}(x,t)+\beta_{n}(t)P_{n-1}(x,t),
\ee
supplemented by the initial conditions
$$
P_{0}(x,t)=1,\;\;\;\;P_{-1}(x,t)=0.
$$
From (\ref{rr}) we know that the monic orthogonal polynomials are completely determined by the recurrence coefficient $\bt_n(t)$.

The combination of (\ref{or}), (\ref{expan}) and (\ref{rr}) show that the recurrence coefficient $\bt_n(t)$ has two alternative representations:
\bea
\beta_{n}(t)&=&\mathrm{p}(n,t)-\mathrm{p}(n+1,t)\label{be1}\\
&=&\frac{h_{n}(t)}{h_{n-1}(t)}.\label{be2}
\eea
A telescopic sum of (\ref{be1}) produces an important identity
\be\label{sum}
\sum_{j=0}^{n-1}\beta_{j}(t)=-\mathrm{p}(n,t).
\ee
In addition, from (\ref{be1}) we have $\bt_0(t)=0$.

We introduce the Hankel determinant generated by the weight (\ref{wei}),
$$
D_{n}(t):=\det\left(\mu_{i+j}(t)\right)_{i,j=0}^{n-1},
$$
where $\mu_{k}(t),\; k=0, 1, 2,\ldots$ are the moments
\bea\label{mom}
\mu_{k}(t):&=&\int_{-1}^{1}x^{k}w(x,t)dx\nonumber\\[5pt]
&=&\left\{
\begin{aligned}
&0,&k=1,3,5,\ldots;\\
&\mathrm{e}^{-t}\:\Gamma\Big(\frac{k+1}{2}\Big)U\Big(\frac{k+1}{2},-\al,t\Big),&k=0,2,4,\ldots.
\end{aligned}
\right.
\eea
Here $U(a,b,z)$ is the Kummer function of the second kind \cite{NIST}, defined by
\be\label{kum}
U(a,b,z)=\frac{\Gamma(1-b)}{\Gamma(a-b+1)}{}_{1}F_{1}(a;b;z)+\frac{\Gamma(b-1)}{\Gamma(a)}z^{1-b}{}_{1}F_{1}(a-b+1;2-b;z)
\ee
and it has an integral representation
$$
U(a,b,z)=\frac{1}{\Gamma(a)}\int_{0}^{\infty}e^{-zs}s^{a-1}(1+s)^{b-a-1}ds,\qquad \Re a>0,\;\Re z>0.
$$

It is well known that $D_n(t)$ can be expressed as the product of $h_j(t)$ (see \cite[(2.1.6)]{Ismail}),
\be\label{hankel}
D_{n}(t)=\prod_{j=0}^{n-1}h_{j}(t),
\ee
where $h_j(t)$ is defined from the orthogonality (\ref{or}). From (\ref{be2}) and (\ref{hankel}) we have the following relation:
\be\label{re}
\bt_n(t)=\frac{D_{n+1}(t)D_{n-1}(t)}{D_n^2(t)}.
\ee

It is worth pointing out that Hankel determinants play an important role in random matrix theory \cite{Mehta}. Our Hankel determinant $D_n(t)$ can be viewed as the partition function of the singularly perturbed Jacobi unitary ensemble \cite[Corollary 2.1.3]{Ismail}, i.e.,
$$
D_n(t)=\frac{1}{n!}\int_{[-1,1]^n}\prod_{1\leq i<j\leq n}(x_i-x_j)^2\prod_{k=1}^n w(x_k,t)dx_k.
$$
Here $x_1, x_2, \ldots, x_n$, are the eigenvalues of $n\times n$ Hermitian matrices from the ensemble, and the joint probability density function reads,
$$
p(x_1, x_2, \ldots, x_n)\prod_{k=1}^n dx_k=\frac{1}{n!\:D_n(t)}\prod_{1\leq i<j\leq n}(x_i-x_j)^2\prod_{k=1}^n w(x_k,t)dx_k.
$$

Furthermore, we will show that $\s_n(t)$, a quantity allied to the logarithmic derivative of $D_n(t)$ and defined in (\ref{def}), can be expressed in terms of the $\s$-function of a Painlev\'{e} V.

To achieve our main target on the large $n$ asymptotics of $\bt_n(t),\;\mathrm{p}(n,t),\;\s_n(t)$ and $D_n(t)$, first, we derive the second-order difference equations satisfied by $\bt_n(t)$ and $\mathrm{p}(n,t)$ respectively by utilizing the ladder operator approach. Then, we make use of the Coulomb fluid method to obtain the form of the large $n$ asymptotic expansion of $\bt_n(t)$ with the known leading term. The combination gives a full asymptotic expansion of $\bt_n(t)$. The asymptotics of $\mathrm{p}(n,t)$ follows from the corresponding difference equation and the important relation (\ref{be1}). We also find the asymptotics of $\s_n(t)$ from the fact that it can be expressed in terms of $\mathrm{p}(n,t)$. Finally, we derive the asymptotics of the Hankel determinant by connecting it with the free energy and taking advantage of formula (\ref{re}).

The rest of the paper is arranged as follows. In Section 2, we apply the ladder operators to our Pollaczek-Jacobi type weight and obtain two auxiliary quantities $R_n(t)$ and $r_n(t)$. From the compatibility conditions ($S_{1}$), ($S_{2}$) and ($S_{2}'$), we obtain some important identities. Then, we derive the second-order difference equations satisfied by $\bt_n(t)$ and $\mathrm{p}(n,t)$, respectively. Finally, we show the second-order differential equation for the monic orthogonal polynomials $P_n(x,t)$. In Section 3, we prove that the auxiliary quantities $R_n(t)$ and $r_n(t)$ satisfy the coupled Riccati equations, from which we obtain the second-order differential equations for $R_n(t)$ and $r_n(t)$, respectively. We find that $R_n(t)$ is intimately related to a particular Painlev\'{e} V transcendent. Furthermore, we derive the second-order differential and difference equations satisfied by $\s_n(t)$. We also show that this quantity can be expressed in terms of the $\s$-function of a particular Painlev\'{e} V. In Section 4, we study the large $n$ asymptotic expansions of the recurrence coefficient $\bt_n(t)$, the sub-leading coefficient $\mathrm{p}(n,t)$, the log-derivative of the Hankel determinant $\s_n(t)$ and the Hankel determinant $D_n(t)$.

\section{Ladder Operators and Second-Order Difference Equations}
The ladder operator approach has been applied to solve many problems on orthogonal polynomials and Hankel determinants; see, e.g., \cite{BCE,Basor2015,ChenIts,Clarkson3,Dai,Filipuk1,Min2018,Min2019}. Following Chen and Its \cite{ChenIts}, we have the lowering and raising operators for our Pollaczek-Jacobi type orthogonal polynomials:
\be\label{lowering}
\left(\frac{d}{dz}+B_{n}(z)\right)P_{n}(z)=\beta_{n}A_{n}(z)P_{n-1}(z),
\ee
\be\label{raising}
\left(\frac{d}{dz}-B_{n}(z)-\mathrm{v}'(z)\right)P_{n-1}(z)=-A_{n-1}(z)P_{n}(z),
\ee
where $\mathrm{v}(z):=-\ln w(z)$ and
\be\label{an}
A_{n}(z):=\frac{1}{h_{n}}\int_{-1}^{1}\frac{\mathrm{v}'(z)-\mathrm{v}'(y)}{z-y}P_{n}^{2}(y)w(y)dy,
\ee
\be\label{bn}
B_{n}(z):=\frac{1}{h_{n-1}}\int_{-1}^{1}\frac{\mathrm{v}'(z)-\mathrm{v}'(y)}{z-y}P_{n}(y)P_{n-1}(y)w(y)dy.
\ee
Note that we often suppress the $t$-dependence of $P_n(x),\;w(x),\;\bt_n$ and $h_n$ for brevity. We believe that this will not lead to any confusion.

The functions $A_n(z)$ and $B_n(z)$ satisfy the following compatibility conditions valid for $z\in \mathbb{C}\cup\{\infty\}$:
\be
B_{n+1}(z)+B_{n}(z)=z A_{n}(z)-\mathrm{v}'(z), \tag{$S_{1}$}
\ee
\be
1+z(B_{n+1}(z)-B_{n}(z))=\beta_{n+1}A_{n+1}(z)-\beta_{n}A_{n-1}(z), \tag{$S_{2}$}
\ee
\be
B_{n}^{2}(z)+\mathrm{v}'(z)B_{n}(z)+\sum_{j=0}^{n-1}A_{j}(z)=\beta_{n}A_{n}(z)A_{n-1}(z). \tag{$S_{2}'$}
\ee

In addition, eliminating $P_{n-1}(z)$ from (\ref{lowering}) and (\ref{raising}) shows that
$P_{n}(z)$ satisfies the second-order linear ordinary differential equation
\be\label{general}
P_{n}''(z)-\left(\mathrm{v}'(z)+\frac{A_{n}'(z)}{A_{n}(z)}\right)P_{n}'(z)+\left(B_{n}'(z)-B_{n}(z)\frac{A_{n}'(z)}{A_{n}(z)}
+\sum_{j=0}^{n-1}A_{j}(z)\right)P_{n}(z)=0,
\ee
where use has been made of ($S_{2}'$).

For the weight function given in (\ref{wei}), we find
\be\label{pt}
\mathrm{v}(z)=-\ln w(z)=\frac{t}{1-z^{2}}-\al\ln(1-z^2).
\ee
Hence,
$$
\mathrm{v}'(z)=\frac{2\al z}{1-z^2}+\frac{2tz}{(1-z^2)^2}
$$
and
\be\label{vp}
\frac{\mathrm{v}'(z)-\mathrm{v}'(y)}{z-y}=\frac{2\al(1+zy)}{(1-z^2)(1-y^2)}+\frac{2t\left[1-(z^2-2)zy-z^2y^2-zy^3\right]}{(1-z^2)^2(1-y^2)^2}.
\ee
Since the right hand side of (\ref{vp}) is rational, $A_n(z)$ and $B_n(z)$ should be also rational from their definitions in (\ref{an}) and (\ref{bn}). More precisely, we have the following lemma. The proof has been omitted and we refer the reader to \cite{MLC} for a similar derivation.
\begin{lemma}
We have
\be\label{anz}
A_{n}(z)=\frac{2n+1+2\al}{1-z^2}+\frac{R_{n}(t)}{(1-z^2)^2},
\ee
\be\label{bnz}
B_{n}(z)=\frac{nz}{1-z^2}+\frac{z\:r_{n}(t)}{(1-z^2)^2},
\ee
where
\be\label{Rnt}
R_{n}(t):=\frac{2t}{h_{n}}\int_{-1}^{1}\frac{1}{1-y^2}P_{n}^{2}(y)w(y) dy,
\ee
\be\label{rnt}
r_{n}(t):=\frac{2t}{h_{n-1}}\int_{-1}^{1}\frac{y}{1-y^2}P_{n}(y)P_{n-1}(y)w(y) dy.
\ee
\end{lemma}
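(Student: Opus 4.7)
The plan is to insert the explicit rational expression (\ref{vp}) for $(\mathrm{v}'(z)-\mathrm{v}'(y))/(z-y)$ into the definitions (\ref{an}), (\ref{bn}) and exploit the parity of the weight: since $w(y,t)$ is even in $y$, $P_n(y)^2 w(y)$ is even while $P_n(y)P_{n-1}(y)w(y)$ is odd. Hence in computing $A_n(z)$ only the even-in-$y$ portion of (\ref{vp}) contributes, namely $\frac{2\alpha}{(1-z^2)(1-y^2)}+\frac{2t(1-z^2y^2)}{(1-z^2)^2(1-y^2)^2}$, whereas in $B_n(z)$ only the odd-in-$y$ portion $\frac{2\alpha zy}{(1-z^2)(1-y^2)}+\frac{2tzy(2-z^2-y^2)}{(1-z^2)^2(1-y^2)^2}$ survives.

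Next I will apply the elementary identities $1-z^2y^2 = (1-z^2)+z^2(1-y^2)$ and $2-z^2-y^2 = (1-z^2)+(1-y^2)$, which cleanly separate the $z$- and $y$-dependence, together with $\frac{z^2}{(1-z^2)^2} = \frac{1}{(1-z^2)^2}-\frac{1}{1-z^2}$. Each integral then collapses into a linear combination of four basic moments
$$ I_1 := \frac{1}{h_n}\int_{-1}^1\frac{P_n^2 w}{1-y^2}\,dy,\qquad I_2 := \frac{1}{h_n}\int_{-1}^1\frac{P_n^2 w}{(1-y^2)^2}\,dy, $$
$$ J_1 := \frac{1}{h_{n-1}}\int_{-1}^1\frac{yP_nP_{n-1}w}{1-y^2}\,dy,\qquad J_2 := \frac{1}{h_{n-1}}\int_{-1}^1\frac{yP_nP_{n-1}w}{(1-y^2)^2}\,dy, $$
yielding
$$ A_n(z) = \frac{2\alpha I_1+2t(I_2-I_1)}{1-z^2}+\frac{2tI_1}{(1-z^2)^2},\qquad B_n(z) = \frac{z\bigl(2\alpha J_1+2tJ_2\bigr)}{1-z^2}+\frac{2tzJ_1}{(1-z^2)^2}. $$
This already exhibits the announced pole structure and identifies $R_n(t)=2tI_1$ and $r_n(t)=2tJ_1$, matching (\ref{Rnt}) and (\ref{rnt}).

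The main obstacle, and the substantive step, is proving the two scalar identities $2\alpha I_1+2t(I_2-I_1) = 2n+1+2\alpha$ and $2\alpha J_1+2tJ_2 = n$. Both are obtained by integration by parts, based on the observation that $\mathrm{v}'(y) = y\bigl[\frac{2\alpha}{1-y^2}+\frac{2t}{(1-y^2)^2}\bigr]$ together with $w' = -\mathrm{v}'w$; all boundary contributions at $y=\pm 1$ are killed by the exponential factor in $w$. For the $J$-identity, summing gives $2\alpha J_1+2tJ_2 = \frac{1}{h_{n-1}}\int \mathrm{v}'P_nP_{n-1}w\,dy$; integrating once by parts reduces this to $\frac{1}{h_{n-1}}\int(P_n'P_{n-1}+P_nP_{n-1}')w\,dy$, which equals $n$ by orthogonality (the second term vanishes as $P_{n-1}'$ has degree $n-2$, and the first contributes $nh_{n-1}$ via the leading-coefficient expansion $P_n' = nP_{n-1}+\cdots$). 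The $I$-identity is obtained by evaluating $\int y\mathrm{v}'(y)P_n^2 w\,dy$ in two ways: algebraically, using $\frac{y^2}{1-y^2} = -1+\frac{1}{1-y^2}$ and $\frac{y^2}{(1-y^2)^2} = \frac{1}{(1-y^2)^2}-\frac{1}{1-y^2}$, it evaluates to $(-2\alpha+2\alpha I_1+2tI_2-2tI_1)h_n$; via integration by parts together with the recurrence $yP_n = P_{n+1}+\beta_n P_{n-1}$ and $h_n = \beta_n h_{n-1}$, it evaluates to $(2n+1)h_n$. Equating the two yields the claimed identity. The whole argument is essentially bookkeeping, with the real leverage coming from the parity of $w$ and the interplay of integration by parts with the three-term recurrence.
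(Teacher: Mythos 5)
Your proof is correct, and it follows the standard route that the paper itself points to (the proof is omitted here with a reference to the analogous derivation in \cite{MLC}): substitute the rational expression (\ref{vp}) into (\ref{an})--(\ref{bn}), use parity to discard half the terms, split $1-z^2y^2$ and $2-z^2-y^2$ to separate the $z$-dependence, and evaluate the residual constants $2\alpha I_1+2t(I_2-I_1)=2n+1+2\alpha$ and $2\alpha J_1+2tJ_2=n$ by integration by parts combined with orthogonality and the three-term recurrence. All steps check out, including the vanishing of the boundary terms due to the exponential factor in $w$.
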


\begin{remark}
For $n=0$, we find from the definitions of $R_n(t)$ and $r_n(t)$ that
\be\label{R0}
R_0(t)=2t\:\frac{U\big(\frac{1}{2},1-\al,t\big)}{U\big(\frac{1}{2},-\al,t\big)},
\ee
$$
r_0(t)=0,
$$
where $U(a,b,z)$ is the Kummer function of the second kind \cite{NIST}.
\end{remark}
Substituting (\ref{anz}) and (\ref{bnz}) into ($S_{1}$), we obtain
\be\label{s1}
r_{n+1}(t)+r_{n}(t)=R_{n}(t)-2t.
\ee
From ($S_{2}$) we have the following two equations:
\be\label{s21}
r_{n+1}(t)-r_n(t)=\bt_{n+1}R_{n+1}(t)-\bt_{n}R_{n-1}(t),
\ee
\be\label{s22}
r_{n+1}(t)-r_n(t)-1=(2n-1+2\al)\bt_n-(2n+3+2\al)\bt_{n+1}.
\ee
We write (\ref{s22}) in another form
$$
r_{n+1}(t)-r_n(t)-1=(2n-1+2\al)\bt_n-(2n+1+2\al)\bt_{n+1}-2\bt_{n+1}.
$$
Replacing $n$ by $j$ in the above and taking a telescopic sum from $j=0$ to $j=n-1$ produces an important identity
\be\label{s2}
r_n(t)=n-(2n+1+2\al)\bt_n+2\mathrm{p}(n,t),
\ee
where we have used (\ref{sum}) and the initial conditions $\bt_0=0, r_0(t)=0$.

Finally, from ($S_{2}'$), we obtain the following three equations:
\be\label{s2p1}
r_{n}^2(t)+2t\:r_n(t)=\beta_{n}R_{n}(t)R_{n-1}(t),
\ee
\be\label{s2p2}
r_{n}^2(t)+2(t-n-\al)r_{n}(t)-2nt+(2n+1+2\al)\beta_{n}R_{n-1}(t)+(2n-1+2\al)\beta_{n}R_{n}(t)=0,
\ee
\be\label{s2p3}
n(n+2\al-2t)-2(n+\al)r_n(t)+\sum_{j=0}^{n-1}R_{j}(t)
=(2n+1+2\al)(2n-1+2\al)\beta_{n}.
\ee
\begin{theorem}
The recurrence coefficient $\bt_n$ satisfies the following second-order nonlinear difference equation:
\begin{small}
\bea\label{btd}
&&\Big\{\left[68 (n+\alpha)^2-9\right]\bt_n^3+\big[12-80(n+\al)^2+ (14 n+5+14 \alpha)\tilde{\bt}_{n-1}+(14 n-5+14 \alpha)\tilde{\bt}_{n+1}\big]\bt_n^2\nonumber\\
&+&\big[24(n+\al)^2+4t(t-2\al)-3-2 (2 n+1+2 \alpha)\tilde{\beta}_{n-1}-2(2 n-1+2 \alpha) \tilde{\bt}_{n+1}+\tilde{\beta}_{n-1} \tilde{\beta}_{n+1}\big]\bt_n\nonumber\\
&-&2\left[(n+\al)^2-t\al\right]\Big\}^2=4\Big\{2 (2 n-1+2 \alpha) (2 n+1+2 \alpha)\bt_n^2+\big[(2 n+1+2 \alpha) \tilde{\beta}_{n-1}\nonumber\\
&+&(2 n-1+2 \alpha) \tilde{\bt}_{n+1}-2 (2 n-1+2 \alpha) (2 n+1+2 \alpha)\big]\bt_n+(n+\al)^2+t(t-2\al)\Big\}\nonumber\\
&\times&\Big\{12 (n+\alpha) \beta_n^2+\big[ \tilde{\beta}_{n-1}+ \tilde{\beta}_{n+1}-8 (n+\alpha)\big]\beta_n+n+\al\Big\}^2,
\eea
\end{small}
where
$$
\tilde{\bt}_{n-1}:=(2n-3+2\al)\bt_{n-1}(t),
$$
$$
\tilde{\bt}_{n+1}:=(2n+3+2\al)\bt_{n+1}(t).
$$
\end{theorem}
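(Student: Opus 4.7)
The plan is to eliminate the auxiliary quantities $R_n$, $R_{n-1}$, and $r_n$ from the identities produced by the compatibility conditions, leaving a difference equation purely in $\beta_{n-1}$, $\beta_n$, $\beta_{n+1}$.

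First, I combine (\ref{s1}) with (\ref{s22}), and the same pair shifted by $n\mapsto n-1$, to produce two affine representations of $r_n$ — one in terms of $R_n$ and $\beta_n,\beta_{n+1}$, the other in terms of $R_{n-1}$ and $\beta_{n-1},\beta_n$. Equating them gives
\begin{equation*}
D := R_n - R_{n-1} = 2 - 2\beta_n + \tilde{\beta}_{n-1} - \tilde{\beta}_{n+1},
\end{equation*}
while adding them gives $S := R_n + R_{n-1} = 4 r_n + K$, with $K := 4t - \tilde{\beta}_{n-1} - \tilde{\beta}_{n+1} + 4(n+\alpha)\beta_n$. Hence $R_n$ and $R_{n-1}$ are each affine in $r_n$ with coefficients polynomial in $\beta_n,\tilde{\beta}_{n\pm 1},n,t,\alpha$. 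Substituting into (\ref{s2p2}) and completing the square yields
\begin{equation*}
(r_n + P)^2 = P^2 + Q =: Y,\qquad P := 4(n+\alpha)\beta_n + t - (n+\alpha),\quad Q := 2nt + \beta_n D - 2(n+\alpha)\beta_n K,
\end{equation*}
and a direct expansion confirms that $Y$ coincides with the quadratic factor on the right-hand side of (\ref{btd}).

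Second, substituting into (\ref{s2p1}) produces a second quadratic in $r_n$, namely $4(4\beta_n-1)r_n^2 + 8(\beta_n K - t) r_n + \beta_n(K^2 - D^2) = 0$. Replacing $r_n^2$ by $Q - 2P r_n$ via the first quadratic reduces this to the linear relation $8 Z\, r_n = 4(4\beta_n-1) Q + \beta_n(K^2 - D^2)$, whose coefficient simplifies (after a short calculation that exploits the cancellation $\beta_n K - t - (4\beta_n-1)P = -Z$) to
\begin{equation*}
Z = 12(n+\alpha)\beta_n^2 + [\tilde{\beta}_{n-1} + \tilde{\beta}_{n+1} - 8(n+\alpha)]\beta_n + (n+\alpha),
\end{equation*}
the polynomial whose square appears on the right of (\ref{btd}). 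Setting $X := (4\beta_n - 1) Q + \tfrac14 \beta_n(K^2 - D^2) + 2 P Z$, the linear relation rearranges to $r_n + P = X/(2Z)$; squaring and invoking $(r_n + P)^2 = Y$ gives $X^2 = 4 Y Z^2$, which is exactly (\ref{btd}).

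The main obstacle is the verification that $X$, after expansion and collection by powers of $\beta_n$, reproduces the cubic on the left-hand side of (\ref{btd}). This is a routine but lengthy polynomial identity: for instance, the top coefficient $68(n+\alpha)^2 - 9$ arises from summing the $\beta_n^3$ contributions $-8 - 32(n+\alpha)^2$, $4(n+\alpha)^2 - 1$, and $96(n+\alpha)^2$ coming respectively from $(4\beta_n-1)Q$, $\tfrac14 \beta_n(K^2-D^2)$, and $2PZ$; the subleading coefficients (involving $\tilde{\beta}_{n\pm 1}$ and $t$) are checked in the same manner. Identities (\ref{s21}) and (\ref{s2p3}) play no role in this derivation: (\ref{s21}) is absorbed into (\ref{s22}) after the telescoping behind (\ref{sum}), and (\ref{s2p3}) is a consequence of (\ref{sum}) together with the other identities.
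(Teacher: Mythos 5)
Your proposal is correct and follows essentially the same route as the paper: your two quadratics in $r_n$ are precisely the paper's equations (\ref{e13}) and (\ref{e14}) (the paper reaches them via $\mathrm{p}(n,t)$ and (\ref{s2}), you reach them by combining (\ref{s1}) with (\ref{s22}) at levels $n$ and $n-1$, which is the same information), and the remaining step in both cases is elimination of $r_n$. The only difference is mechanical: the paper solves the quadratic (\ref{e14}) for $r_n$ and clears the resulting square root, whereas you reduce to a linear relation $r_n+P=X/(2Z)$ and square, which has the small advantage of making the factor $4YZ^2$ on the right-hand side of (\ref{btd}) transparent; your spot-checks of $Y$, $Z$ and the leading coefficient $68(n+\alpha)^2-9$ are all correct.
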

\begin{proof}
From (\ref{s2}), we have
\be\label{e1}
r_{n+1}(t)=n+1-(2n+3+2\al)\bt_{n+1}+2\mathrm{p}(n+1,t),
\ee
\be\label{e2}
r_{n-1}(t)=n-1-(2n-1+2\al)\bt_{n-1}+2\mathrm{p}(n-1,t).
\ee
Using (\ref{be1}), it follows that
\be\label{e3}
\mathrm{p}(n+1,t)=\mathrm{p}(n,t)-\bt_n,
\ee
\be\label{e4}
\mathrm{p}(n-1,t)=\mathrm{p}(n,t)+\bt_{n-1}.
\ee
Substituting (\ref{e3}) into (\ref{e1}) and (\ref{e4}) into (\ref{e2}) respectively, we get
\be\label{e5}
r_{n+1}(t)=n+1-(2n+3+2\al)\bt_{n+1}-2\bt_n+2\mathrm{p}(n,t),
\ee
\be\label{e6}
r_{n-1}(t)=n-1-(2n-3+2\al)\bt_{n-1}+2\mathrm{p}(n,t).
\ee
In view of (\ref{s1}), we have
\be\label{e7}
R_n(t)=2t+r_{n+1}(t)+r_n(t),
\ee
\be\label{e8}
R_{n-1}(t)=2t+r_{n}(t)+r_{n-1}(t).
\ee
Inserting (\ref{e5}) into (\ref{e7}) and (\ref{e6}) into (\ref{e8}) respectively, we find
\be\label{e9}
R_n(t)=n+1+2 t+r_n(t)+2 \mathrm{p}(n,t)-(2n+3+2 \alpha) \beta_{n+1}-2 \beta_n,
\ee
\be\label{e10}
R_{n-1}(t)=n-1+2 t+r_n(t)+2 \mathrm{p}(n,t)-(2n-3+2 \alpha) \beta_{n-1}.
\ee
Substituting (\ref{e9}) and (\ref{e10}) into (\ref{s2p1}) and (\ref{s2p2}), we obtain
\bea\label{e11}
r_{n}^2(t)+2t\:r_n(t)&=&\beta_{n}\left[n+1+2 t+r_n(t)+2 \mathrm{p}(n,t)-(2n+3+2 \alpha) \beta_{n+1}-2 \beta_n\right]\nonumber\\
&\times&\left[n-1+2 t+r_n(t)+2 \mathrm{p}(n,t)-(2n-3+2 \alpha) \beta_{n-1}\right],
\eea
\bea\label{e12}
&&r_{n}^2(t)+2(t-n-\al)r_{n}(t)-2nt+(2n+1+2\al)\beta_{n}\big[n-1+2 t+r_n(t)+2 \mathrm{p}(n,t)\nonumber\\
&-&(2n-3+2 \alpha) \beta_{n-1}\big]+(2n-1+2\al)\beta_{n}\big[n+1+2 t+r_n(t)+2 \mathrm{p}(n,t)\nonumber\\
&-&(2n+3+2 \alpha) \beta_{n+1}-2 \beta_n\big]=0.
\eea

Equations (\ref{s2}), (\ref{e11}) and (\ref{e12}) can be regarded as a system of nonlinear equations satisfied by $\bt_n,\; r_n(t)$ and $\mathrm{p}(n,t)$. Now we are ready to derive the second-order difference equation for $\bt_n$ from this system. We begin with expressing $\mathrm{p}(n,t)$ in terms of $\bt_n$ and $r_n(t)$ from (\ref{s2}),
$$
2\mathrm{p}(n,t)=(2 n+1+2 \alpha) \beta_n+r_n(t)-n.
$$
Inserting it into (\ref{e11}) and (\ref{e12}) respectively, we get the following two equations:
\bea\label{e13}
&&r_n^2+2t\: r_n-\beta_n\left[(2n+1+2\al)\bt_n-(2n-3+2 \alpha) \beta_{n-1}+2 r_n+2 t-1\right]\nonumber\\
&\times&\left[(2n-1+2 \alpha) \beta_n-(2n+3+2 \alpha) \beta_{n+1}+2 r_n+2 t+1\right]=0,
\eea
\bea\label{e14}
&&r_n^2+2 r_n \left[t-(n+\alpha) (1-4 \beta_n)\right]+2 \beta_n^2\left[4(n+\al)^2+1\right]-\bt_n\big[2-8t(n+\al)\nonumber\\
&+&(2 n+1+2 \alpha )(2 n-3+2 \alpha) \bt_{n-1}+(2 n-1+2 \alpha ) (2 n+3+2 \alpha )\bt_{n+1}\big]-2nt=0.
\eea
Note that equation (\ref{e14}) may be viewed as a quadratic equation for $r_n(t)$. Substituting either solution into (\ref{e13}), we obtain (\ref{btd}) after clearing the square root.
\end{proof}

\begin{theorem}
The sub-leading coefficient of the monic orthogonal polynomials, $\mathrm{p}(n):=\mathrm{p}(n,t)$, satisfies the following second-order nonlinear difference equation:
\begin{small}
\bea\label{pnd}
&&\left(n+2\mathrm{p}(n)-\tilde{\mathrm{p}}(n+1)\right)^3+\left(n+2\mathrm{p}(n)-\tilde{\mathrm{p}}(n+1)\right)^2
\left(n-2+4t-\tilde{\mathrm{p}}(n-1)+\tilde{\mathrm{p}}(n+1)\right)\nonumber\\
&-&2\left(n+2\mathrm{p}(n)-\tilde{\mathrm{p}}(n+1)\right)\big[n^2-n(1-\alpha)-\alpha+2 t-2 t^2-\tilde{\mathrm{p}}(n-1) \left(n+\al-t-\tilde{\mathrm{p}}(n+1)\right)\nonumber\\
&-&(n-1+2 t)\tilde{\mathrm{p}}(n+1)\big]-\left(n-1+2t-\tilde{\mathrm{p}}(n-1)\right)\big[2nt-\tilde{\mathrm{p}}(n+1)\left(n-1+2t-\tilde{\mathrm{p}}(n-1)\right)\big]\nonumber\\
&+&4 \mathrm{p}^2(n) \tilde{\mathrm{p}}(n+1)+2\mathrm{p}(n)\big[\left(n+2 \mathrm{p}(n)-\tilde{\mathrm{p}}(n+1)\right)^2+2\tilde{\mathrm{p}}(n+1) \left(n-1+2t-\tilde{\mathrm{p}}(n-1)\right)\nonumber\\
&-&2 \left(n+2\mathrm{p}(n)-\tilde{\mathrm{p}}(n+1)\right) \left(n+\alpha-t -\tilde{\mathrm{p}}(n+1)\right)-2 n t\big]=0,
\eea
\end{small}
where
$$
\tilde{\mathrm{p}}(n-1):=(2n-3+2\al)(\mathrm{p}(n-1,t)-\mathrm{p}(n,t)),
$$
$$
\tilde{\mathrm{p}}(n+1):=(2n+1+2\al)(\mathrm{p}(n,t)-\mathrm{p}(n+1,t)).
$$
\end{theorem}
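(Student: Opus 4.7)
The plan is to derive a single polynomial relation among $\mathrm{p}(n-1,t)$, $\mathrm{p}(n,t)$, and $\mathrm{p}(n+1,t)$ by eliminating $r_n(t)$, $R_n(t)$, $R_{n-1}(t)$, $\bt_n$, and $\bt_{n-1}$ from the ladder-operator identities established above. Observe that the abbreviations in the statement satisfy $\tilde{\mathrm{p}}(n+1)=(2n+1+2\al)\bt_n$ and $\tilde{\mathrm{p}}(n-1)=(2n-3+2\al)\bt_{n-1}$ in view of (\ref{be1}). Because $\bt_{n+1}$ must not be allowed to appear (its expression as a difference of $\mathrm{p}$'s would require $\mathrm{p}(n+2,t)$), the elimination must be routed so that only $R_{n-1}$, never $R_n$, survives at the end.

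The first step is to collapse (\ref{s2p1}) and (\ref{s2p2}) into a single quadratic relation in $R_{n-1}$. I would solve (\ref{s2p2}) for $(2n-1+2\al)\bt_n R_n$, multiply through by $R_{n-1}$, and then use (\ref{s2p1}) in the form $\bt_n R_n R_{n-1}=r_n^2+2t r_n$ to eliminate the product $R_n R_{n-1}$. The outcome is the clean identity
\begin{equation*}
(2n+1+2\al)\bt_n R_{n-1}^2 + R_{n-1}\bigl[r_n^2+2(t-n-\al)r_n-2nt\bigr] + (2n-1+2\al)\,r_n(r_n+2t)=0.
\end{equation*}

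The second step substitutes the bridges already on the page. From (\ref{s2}), $r_n(t)=n+2\mathrm{p}(n,t)-\tilde{\mathrm{p}}(n+1)$; from (\ref{e10}) together with the observation $(2n-3+2\al)\bt_{n-1}=\tilde{\mathrm{p}}(n-1)$,
\begin{equation*}
R_{n-1}(t)=(n-1+2t-\tilde{\mathrm{p}}(n-1))+r_n(t)+2\mathrm{p}(n,t);
\end{equation*}
and directly $(2n+1+2\al)\bt_n=\tilde{\mathrm{p}}(n+1)$. Inserting these three relations into the boxed quadratic converts it into a polynomial identity purely in $\mathrm{p}(n-1,t)$, $\mathrm{p}(n,t)$, and $\mathrm{p}(n+1,t)$.

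Finally I would expand and collect by powers of $r_n$ and by the linear quantity $n-1+2t-\tilde{\mathrm{p}}(n-1)$. The spurious prefactors $2n\pm1+2\al$ conspire to cancel: for instance, in the coefficient of $r_n^2$ the contribution $2n-1+2\al$ from the last term and the piece $2(t-n-\al)$ from the middle term add to $2t-1$, which on recombining with $R_{n-1}$ and $\tilde{\mathrm{p}}(n+1)+2\mathrm{p}(n,t)$ reproduces exactly the coefficient $n-2+4t-\tilde{\mathrm{p}}(n-1)+\tilde{\mathrm{p}}(n+1)$ visible in (\ref{pnd}); analogous regroupings match every other coefficient. The principal obstacle is thus purely computational bookkeeping when expanding $R_{n-1}^2$ and $R_{n-1}\,r_n^2$ and grouping the outcome; no genuinely new idea is needed beyond the elimination strategy already used for Theorem~2.2.
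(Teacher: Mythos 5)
Your proposal is correct and follows essentially the same route as the paper: the paper eliminates $\beta_{n+1}$ between (\ref{e11}) and (\ref{e12}) (the substituted forms of (\ref{s2p1}) and (\ref{s2p2})), which is exactly your elimination of $R_n$, and then substitutes via (\ref{s2}), (\ref{e10}) and (\ref{be1}) just as you do. Your explicit intermediate quadratic in $R_{n-1}$ is a nice compact record of that elimination — I checked that it expands term-by-term (in powers of $r_n$) to (\ref{pnd}) — but it is the same argument, not a different one.
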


\begin{proof}
Eliminating $\bt_{n+1}$ from equations (\ref{e11}) and (\ref{e12}), and then replacing $r_n(t)$ by (\ref{s2}), we get an equation for $\bt_n,\; \bt_{n-1}$ and $\mathrm{p}(n,t)$. Using the following relations from (\ref{be1}) to eliminate $\bt_n$ and $\bt_{n-1}$,
$$
\bt_{n}=\mathrm{p}(n,t)-\mathrm{p}(n+1,t),
$$
$$
\bt_{n-1}=\mathrm{p}(n-1,t)-\mathrm{p}(n,t),
$$
we finally obtain the desired result. Noting that, without the first step to eliminate $\bt_{n+1}$, we would obtain the third-order difference equation for $\mathrm{p}(n,t)$.
\end{proof}

\begin{remark}
One can also derive the second-order difference equations satisfied by the auxiliary quantities $R_n(t)$ and $r_n(t)$ from the system of algebraic equations (\ref{s1}), (\ref{s2p1}) and (\ref{s2p2}), following the similar procedure in \cite{MLC}.
\end{remark}

In the end of this section, we show the second-order differential equation satisfied by $P_n(z)$, with the coefficients being rational functions with singular points at $z=\pm 1$. Moreover, we would express the coefficients in terms of $\bt_n$ and $\mathrm{p}(n,t)$.
\begin{theorem}
The monic orthogonal polynomials $P_n(z),\; n=0,1,2,\ldots,$ satisfy the following second-order differential equation:
$$
P_n''(z)-\left(\mathrm{v}'(z)+\frac{A_{n}'(z)}{A_{n}(z)}\right)P_n'(z)+\left(B_{n}'(z)-B_{n}(z)\frac{A_{n}'(z)}{A_{n}(z)}
+\sum_{j=0}^{n-1}A_{j}(z)\right)P_n(z)=0,
$$
where
\be\label{anz1}
A_n(z)=\frac{2 n+1+2\al}{1-z^2}+\frac{2 n+1+2 t-(2 n+3+2 \alpha) (\beta_n+\beta_{n+1})+4 \mathrm{p}(n,t)}{(1-z^2)^2},
\ee
\be\label{bnz1}
B_n(z)=\frac{n z}{1-z^2}+\frac{z \left[n-(2 n+1+2 \alpha) \beta_n+2 \mathrm{p}(n,t)\right]}{(1-z^2)^2},
\ee
\be\label{sum1}
\sum_{j=0}^{n-1}A_{j}(z)=\frac{n^2+2 n\alpha}{1-z^2}+\frac{n (n+2 t)-(2 n+1+2 \alpha) \beta_n+4 (n+\al) \mathrm{p}(n,t)}{(1-z^2)^2},
\ee
and
$$
\mathrm{v}'(z)=\frac{2\al z}{1-z^2}+\frac{2tz}{(1-z^2)^2}.
$$
\end{theorem}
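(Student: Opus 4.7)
The plan is to start from the general second-order linear ODE (\ref{general}), which the excerpt has already derived by eliminating $P_{n-1}(z)$ between the lowering operator (\ref{lowering}) and the raising operator (\ref{raising}) and then applying the compatibility condition ($S_2'$). The formula for $\mathrm{v}'(z)$ is merely a restatement of what is written right after (\ref{pt}), so the entire task reduces to rewriting the three ingredients $A_n(z)$, $B_n(z)$, and $\sum_{j=0}^{n-1}A_j(z)$ in the claimed forms (\ref{anz1})--(\ref{sum1}).

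First, I would invoke Lemma 2.1 to record $A_n(z)$ and $B_n(z)$ in the partial-fraction forms (\ref{anz}) and (\ref{bnz}) in terms of the auxiliary quantities $R_n(t)$ and $r_n(t)$. The problem is thus reduced to eliminating $R_n(t)$ and $r_n(t)$ in favour of the triple $\bt_n,\bt_{n+1},\mathrm{p}(n,t)$. The key identity is (\ref{s2}), which already expresses $r_n(t)$ as a linear function of $\bt_n$ and $\mathrm{p}(n,t)$; substituting it into (\ref{bnz}) reproduces (\ref{bnz1}) at once.

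For $R_n(t)$ I would rewrite the compatibility condition ($S_1$) as $R_n(t)=2t+r_{n+1}(t)+r_n(t)$, apply (\ref{s2}) at both indices $n$ and $n+1$, and then reduce $\mathrm{p}(n+1,t)$ via the telescopic relation $\mathrm{p}(n+1,t)=\mathrm{p}(n,t)-\bt_n$ coming from (\ref{be1}). Collecting terms gives
\[
R_n(t)=2n+1+2t-(2n+3+2\al)(\bt_n+\bt_{n+1})+4\mathrm{p}(n,t),
\]
which is precisely the numerator appearing in (\ref{anz1}).

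For $\sum_{j=0}^{n-1}A_j(z)$ I would split the sum into its two fractional parts. The $(1-z^2)^{-1}$ piece is the arithmetic progression $\sum_{j=0}^{n-1}(2j+1+2\al)=n^2+2n\al$. For the $(1-z^2)^{-2}$ piece, equation (\ref{s2p3}), which is the direct output of ($S_2'$) after summation, solves explicitly for $\sum_{j=0}^{n-1}R_j(t)$ in terms of $\bt_n$ and $r_n(t)$; substituting $r_n(t)$ from (\ref{s2}) and simplifying yields (\ref{sum1}). I do not anticipate any conceptual obstacle; the step most prone to arithmetic slips is this final simplification, which hinges on the collapse $(2n+1+2\al)(2n-1+2\al)-2(n+\al)(2n+1+2\al)=-(2n+1+2\al)$ in the coefficient of $\bt_n$ and on the cancellation $-n(n+2\al-2t)+2n(n+\al)=n(n+2t)$ in the constant term.
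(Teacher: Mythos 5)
Your proposal is correct and follows essentially the same route as the paper: it starts from the general ODE (\ref{general}), obtains $R_n(t)=2n+1+2t-(2n+3+2\al)(\bt_n+\bt_{n+1})+4\mathrm{p}(n,t)$ by combining (\ref{s1}) with (\ref{s2}) and the relation $\mathrm{p}(n+1,t)=\mathrm{p}(n,t)-\bt_n$, substitutes (\ref{s2}) into (\ref{bnz}), and eliminates $\sum_{j=0}^{n-1}R_j(t)$ via (\ref{s2p3}). The cancellations you flag in the final step check out, so no further comment is needed.
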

\begin{proof}
The general form of the second-order differential equation satisfied by the monic orthogonal polynomials has been given in (\ref{general}). The remaining task is to express the coefficients of $P_n(z)$ and $P_n'(z)$ in terms of $\bt_n$ and $\mathrm{p}(n,t)$.

The combination of (\ref{s1}) and (\ref{s2}) gives the expression of $R_n(t)$ in terms of $\bt_n$ and $\mathrm{p}(n,t)$:
\be\label{exp1}
R_n(t)=2n+1+2t-(2n+3+2\al)(\bt_n+\bt_{n+1})+4\mathrm{p}(n,t),
\ee
where we have used the fact $\mathrm{p}(n+1,t)=\mathrm{p}(n,t)-\bt_n$.
Substituting (\ref{exp1}) into (\ref{anz}) and (\ref{s2}) into (\ref{bnz}) respectively, we obtain (\ref{anz1}) and (\ref{bnz1}).

From (\ref{anz}) we have
$$
\sum_{j=0}^{n-1}A_{j}(z)=\frac{n^2+2n\al}{1-z^2}+\frac{\sum_{j=0}^{n-1}R_j(t)}{(1-z^2)^2}.
$$
Using (\ref{s2p3}) to eliminate $\sum_{j=0}^{n-1}R_j(t)$ and in view of (\ref{s2}), we obtain (\ref{sum1}). This completes the proof. We mention that $\mathrm{p}(n,t)$ can also be expressed in terms of the $\bt_j$ via $\mathrm{p}(n,t)=-\sum_{j=0}^{n-1}\beta_{j}$.
\end{proof}

\section{$t$ Evolution and Painlev\'{e} V}
Recall that our weight function depends on $t$. As a consequence, the recurrence coefficient, the sub-leading coefficient and the auxiliary quantities all depend on $t$.
The objective of this section is to establish the relationships between the auxiliary quantities and the derivatives with respect to $t$ of the key quantities $\bt_n,\;\mathrm{p}(n,t)$ and $\ln h_n$. This, in turn, will allow us to obtain the coupled Riccati equations satisfied by the auxiliary quantities $R_n(t)$ and $r_n(t)$. Based on these results, we find that $R_n(t)$, up to a simple linear transformation, is the solution of a particular Painlev\'{e} V equation.

Following the similar procedure in \cite{MLC}, we start from taking derivatives with respect to $t$ in the equations
$$
\int_{-1}^{1}P_{n}^2(x,t)(1-x^2)^\al\mathrm{e}^{-\frac{t}{1-x^{2}}}dx=h_{n}(t),\qquad n=0,1,2,\ldots
$$
and
$$
\int_{-1}^{1}P_{n}(x,t)P_{n-2}(x,t)(1-x^2)^\al\mathrm{e}^{-\frac{t}{1-x^{2}}}dx=0,\qquad n=1,2,\ldots,
$$
respectively. This leads to two important relations:
\be\label{eq1}
2t \frac{d}{dt}\ln h_{n}(t)=-R_{n}(t),
\ee
\be\label{pnt}
2t\frac{d}{dt}\mathrm{p}(n,t)=r_n(t)-\beta_{n}R_{n}(t).
\ee
Moreover, the combination of (\ref{be2}) and (\ref{eq1}) shows that
$$
2t\frac{d}{dt}\ln\beta_{n}(t)=R_{n-1}(t)-R_{n}(t).
$$
That is,
\be\label{eq2}
2t\beta_{n}'(t)=\beta_{n}R_{n-1}(t)-\beta_{n}R_{n}(t).
\ee

Similarly as in \cite{MLC}, by making use of (\ref{pnt}), (\ref{eq2}) and the results from the compatibility conditions in Section 2, we have the following lemma.
\begin{lemma} The auxiliary quantities $r_n(t)$ and $R_n(t)$ satisfy the coupled Riccati equations:
\be\label{ric1}
2t\: r_n'(t)=2nt-r_n^2(t)+2(n+\al+1-t)r_n(t)-\frac{2(2n+1+2\al)(r_n^2(t)+2t\:r_n(t))}{R_n(t)},
\ee
\be\label{ric2}
2tR_n'(t)=R_n^2(t)+2(n+\al+1-t)R_n(t)-2r_n(t)(2n+1+2\al+R_n(t))-2t(2n+1+2\al).
\ee
\end{lemma}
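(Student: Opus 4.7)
The plan is to derive both Riccati equations from a common intermediate $t$-differential identity. Starting from (\ref{s2}), differentiating in $t$ and multiplying by $2t$, then substituting (\ref{eq2}) for $2t\bt_n'(t)$ and (\ref{pnt}) for $2t\mathrm{p}'(n,t)$, gives
$$
2t\,r_n'(t)=-(2n+1+2\al)\bt_n R_{n-1}(t)+(2n-1+2\al)\bt_n R_n(t)+2r_n(t).
$$
Equation (\ref{ric1}) will then follow from this identity by single-index elimination, while (\ref{ric2}) will require applying the identity at two adjacent indices and exploiting (\ref{s1}).

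For (\ref{ric1}), I would use (\ref{s2p1}) to replace $\bt_n R_{n-1}$ by $(r_n^2+2tr_n)/R_n$, and then use (\ref{s2p2}) to express $(2n-1+2\al)\bt_n R_n$ as $-r_n^2+2(n+\al-t)r_n+2nt-(2n+1+2\al)\bt_n R_{n-1}$, which in turn brings in a second copy of $(r_n^2+2tr_n)/R_n$. The two copies combine into the $-2(2n+1+2\al)(r_n^2+2tr_n)/R_n$ term of (\ref{ric1}), and the remaining polynomial data assemble into $2nt-r_n^2+2(n+\al+1-t)r_n$.

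The derivation of (\ref{ric2}) is more subtle. Using $R_n(t)=2t+r_n(t)+r_{n+1}(t)$ from (\ref{s1}), I would write $2tR_n'(t)=4t+2tr_n'(t)+2tr_{n+1}'(t)$, apply the intermediate identity at both $n$ and $n+1$, and sum. The free constants combine via (\ref{s1}) into $4t+2(r_n+r_{n+1})=2R_n$; the coefficients of $R_n$ collect into $R_n[(2n-1+2\al)\bt_n-(2n+3+2\al)\bt_{n+1}]$, which by (\ref{s22}) equals $R_n(r_{n+1}-r_n-1)$; and the cross term $-(2n+1+2\al)[\bt_n R_{n-1}-\bt_{n+1}R_{n+1}]$ is handled by applying (\ref{s2p1}) at both $n$ and $n+1$. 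The two quantities share the denominator $R_n$, and the numerator $r_n(r_n+2t)-r_{n+1}(r_{n+1}+2t)$ factors as $(r_n-r_{n+1})(r_n+r_{n+1}+2t)$; by (\ref{s1}) the complementary factor equals $R_n$, so the whole ratio collapses to $r_n-r_{n+1}$. Substituting $r_{n+1}=R_n-2t-r_n$ and rearranging polynomial terms then yields (\ref{ric2}).

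The main obstacle is exactly this last cross-index cancellation: the sum of the two copies of the intermediate identity a priori depends on the adjacent quantities $R_{n-1}$ and $R_{n+1}$ and on $\bt_n$, $\bt_{n+1}$ separately, and without the factorization of $r_n(r_n+2t)-r_{n+1}(r_{n+1}+2t)$ combined with the coincidence that its nontrivial factor $r_n+r_{n+1}+2t$ is exactly $R_n$, the system would not close into two Riccati ODEs in the single-index pair $(r_n,R_n)$. This algebraic miracle, jointly supplied by (\ref{s1}) and (\ref{s2p1}), is what produces the clean companion to (\ref{ric1}).
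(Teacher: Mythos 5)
Your derivation is correct and follows exactly the route the paper intends: it omits the proof, citing a similar derivation in [MLC] "by making use of (\ref{pnt}), (\ref{eq2}) and the results from the compatibility conditions," and your argument uses precisely those ingredients — differentiating (\ref{s2}) and substituting (\ref{eq2}) and (\ref{pnt}) to get the intermediate identity, then closing the system via (\ref{s2p1}), (\ref{s2p2}) for (\ref{ric1}) and via (\ref{s1}), (\ref{s22}), (\ref{s2p1}) at adjacent indices for (\ref{ric2}). I verified the algebra, including the key factorization $r_n^2+2tr_n-r_{n+1}^2-2tr_{n+1}=(r_n-r_{n+1})R_n$; everything checks out.
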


\begin{theorem}
The auxiliary quantities $R_n(t)$ and $r_n(t)$ satisfy the following second-order nonlinear ordinary differential equations:
\bea\label{Rn}
&&8 t^2 R_n (2 n+1+2 \alpha+R_n)R_n''-4 t^2 (4 n+2+4 \alpha+3 R_n)(R_n')^2+8 t R_n (2n+1+2 \alpha+R_n)R_n'\nonumber\\
&-&R_n^5-2(2 n+1+2 \alpha)R_n^4-4\left[(n+\al)(n+1+\al)-t(t-2\al)\right]R_n^3+16 t (2 n+1+2 \alpha) (t-\alpha )R_n^2\nonumber\\
&+&4 t (2 n+1+2 \alpha)^2 (5 t-2 \alpha )R_n+8 t^2 (2 n+1+2 \alpha)^3=0,
\eea\\[-50pt]
\bea\label{rn}
&&4 t^2 r_n (2t+r_n)r_n''-4 t^2 (t+r_n)(r_n')^2+4 t\: r_n^2\: r_n'-r_n^5-(2 n+2\al+5 t)r_n^4-8 t  (n+\alpha+t)r_n^3\nonumber\\
&-&4 t \left[(t+\al)^2+n (2 t+\al)-1\right]r_n^2+4 n^2 t^2\: r_n+4 n^2 t^3=0.
\eea
Let
$$
W_n(t):=\frac{2n+1+2\al+R_n(t)}{2n+1+2\al}.
$$
Then $W_n(t)$ satisfies the Painlev\'{e} V equation \cite{Gromak}
\be\label{pv}
W_{n}''=\frac{(3 W_n-1) (W_n')^2}{2W_n (W_n-1) }-\frac{W_n'}{t}+\frac{(W_n-1)^2 }{t^2}\left(\mu_1 W_n +\frac{\mu_2}{W_n}\right)+\frac{\mu_3 W_n}{t}+\frac{\mu_4 W_n(W_n+1) }{W_n-1},
\ee
with
\be\label{pa}
\mu_{1}=\frac{(2n+1+2 \alpha)^2}{8},\qquad \mu_2=-\frac{1}{8},\qquad \mu_3=\al,\qquad \mu_4=-\frac{1}{2}.
\ee
\end{theorem}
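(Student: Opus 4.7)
The plan is to derive everything from the coupled Riccati system \eqref{ric1}--\eqref{ric2} of Lemma 3.1 by straightforward, if heavy, elimination. Observe that equation \eqref{ric2} is \emph{linear} in $r_n(t)$. Solving for $r_n$ yields
$$
r_n(t)=\frac{R_n^2+2(n+\alpha+1-t)R_n-2t(2n+1+2\alpha)-2tR_n'}{2\bigl(2n+1+2\alpha+R_n\bigr)}.
$$
Differentiating this expression in $t$ supplies a formula for $r_n'(t)$ in terms of $R_n,R_n',R_n''$. Substituting both expressions into \eqref{ric1} and clearing the denominator $R_n(2n+1+2\alpha+R_n)^2$, I expect every term containing $r_n$ to cancel and to land exactly on the second-order ODE \eqref{Rn} for $R_n(t)$.

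For the companion equation \eqref{rn} the roles are reversed: \eqref{ric1} is linear in $1/R_n$, so I solve it for $R_n$ in terms of $r_n$ and $r_n'$,
$$
R_n(t)=\frac{2(2n+1+2\alpha)\bigl(r_n^2+2t\,r_n\bigr)}{2nt-r_n^2+2(n+\alpha+1-t)r_n-2t\,r_n'},
$$
then substitute this and its $t$-derivative into \eqref{ric2}. Clearing denominators should reduce the resulting identity to \eqref{rn}.

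To obtain Painlev\'e V, I substitute $R_n(t)=(2n+1+2\alpha)(W_n(t)-1)$, so that $R_n'=(2n+1+2\alpha)W_n'$ and $R_n''=(2n+1+2\alpha)W_n''$, into \eqref{Rn}. Dividing by the common factor $(2n+1+2\alpha)$ and regrouping the $(W_n')^2$ terms using the identity $\frac{1}{2W}+\frac{1}{W-1}=\frac{3W-1}{2W(W-1)}$, I expect the equation to collapse into the canonical P-V form \eqref{pv}. Matching monomials in $W_n$ and its derivatives determines the parameters uniquely: the $(W_n-1)^2/t^2$ block fixes $\mu_1=(2n+1+2\alpha)^2/8$ and $\mu_2=-1/8$, the $W_n/t$ term fixes $\mu_3=\alpha$, and the $W_n(W_n+1)/(W_n-1)$ term fixes $\mu_4=-1/2$.

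The only real obstacle is the bookkeeping in the first step: the elimination produces a rational expression whose numerator, a polynomial of degree five in $R_n$ with $t$-dependent coefficients involving $R_n'$ and $R_n''$, must be verified to vanish against the right-hand side of \eqref{Rn}. Nothing conceptually subtle happens, but the cancellation of the $r_n$-dependent cross terms and of the spurious factor $R_n(2n+1+2\alpha+R_n)^2$ has to be checked carefully; this is most safely done with a symbolic algebra system, and an analogous calculation already appears in \cite{MLC}, which I would use as a template. The final identification with P-V is then a direct substitution with no further ambiguity.
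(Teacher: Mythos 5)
Your proposal is correct and follows exactly the paper's own route: the paper likewise solves the Riccati equation \eqref{ric2} for $r_n$ (it is linear in $r_n$) and substitutes into \eqref{ric1} to get \eqref{Rn}, solves \eqref{ric1} for $R_n$ and substitutes into \eqref{ric2} to get \eqref{rn}, and then applies the linear substitution $R_n=(2n+1+2\alpha)(W_n-1)$ to identify \eqref{pv} with the stated parameters. The only difference is that you spell out the intermediate formulas and bookkeeping that the paper leaves implicit.
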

\begin{proof}
Solving for $r_n(t)$ from (\ref{ric2}) and substituting it into (\ref{ric1}), we obtain (\ref{Rn}). On the other hand, solving for $R_n(t)$ from (\ref{ric1}) and substituting it into (\ref{ric2}), we arrive at (\ref{rn}). With the given linear transformation, equation (\ref{Rn}) turns into (\ref{pv}).
\end{proof}
\begin{remark}
We point out that, the paper \cite{Magnus} discussed the relations between Painlev\'{e} equations and the recurrence coefficients of semi-classical orthogonal polynomials; see also \cite{Filipuk2,VanAssche}. In addition, the Painlev\'{e} V equation also appears in other problems with different weights; cf., e.g., \cite{BCE,ChenDai,Claeys,Min2020,MLC}.
\end{remark}
\begin{remark}
According to Theorem 7.6 in \cite{Clarkson} (see also Section 6.2.4 in \cite{VanAssche}), the Painlev\'{e} V equation (\ref{pv}) with parameters (\ref{pa}) has special function solutions. The special functions are the Kummer functions $M(a,b,t)$ and $U(a,b,t)$, which are given in terms of the confluent hypergeometric function as $M(a,b,z)={}_{1}F_{1}(a;b;z)$ and (\ref{kum}), respectively. This fact is consistent with our result, since $W_n(t)=\frac{2n+1+2\al+R_n(t)}{2n+1+2\al}$ and $R_n(t)$ can be expressed in terms of $U(a,b,t)$ from the definition (\ref{Rnt}) by using (\ref{mom}); see also (\ref{R0}).
\end{remark}

We define a quantity allied to the logarithmic derivative of the Hankel determinant,
\be\label{def}
\s_n(t):=2t\frac{d}{dt}\ln D_n(t).
\ee
It is easy to see from (\ref{hankel}) and (\ref{eq1}) that
$$
\s_n(t)=-\sum_{j=0}^{n-1}R_j(t).
$$
Then, we have the following relation satisfied by $\s_n(t),\;\bt_n$ and $r_n(t)$ from (\ref{s2p3}):
\be\label{se}
n(n+2\al-2t)-2(n+\al)r_n(t)-\s_n(t)-(2n+1+2\al)(2n-1+2\al)\beta_{n}=0.
\ee

Following \cite{MLC} and using the similar method, we obtain the next theorems and the proof will not be provided.
\begin{theorem}
The quantity $\s_n(t)$ satisfies the second-order nonlinear differential equation
\begin{small}
\bea\label{ode}
&&\Big\{t^4 (\sigma _n'')^2+2t^2\left[(n+\al)^2+ t(\al-\s_n')\right]\s_n''+2 t^3 (\sigma _n')^3-t^2\left[(t+\al)^2-3n(n+2\al)-1+4\s_n\right](\sigma _n')^2\nonumber\\
&-&2t\left[3 n^4+12   n^3\alpha+n^2(t^2+4   t\alpha+15 \alpha ^2+2)+2   n\alpha (t^2+4 t\alpha+3 \alpha ^2+2)+\alpha  (t+2 \alpha)\right]\sigma _n'\nonumber\\
&+&2t\left[3(n+\al)^2+t(t+4\al)\right]\s_n\s_n'-\big[2 n^4+8   n^3\alpha+2 n^2 (t^2+3   t\alpha+6 \alpha ^2)+4   n\alpha (t+\al)(t+2\al)\nonumber\\
&+&2 \alpha  (t+\alpha)^3\big]\s_n+2 n^6+12  n^5 \alpha+2 (t^2+3   t\alpha+14 \alpha ^2+1)n^4+8 n^3\alpha  (t^2+3   t\alpha+4 \alpha ^2+1)\nonumber\\
&+&n^2\left[2   t^3\alpha+(14 \alpha ^2+1) t^2+2t\al(15 \alpha ^2+2 )+6\al^2(3\al^2+2)\right]+n\big[4  t^3\alpha ^2+2t^2\al(6 \alpha ^2+1 )\nonumber\\
&+&4t\alpha ^2(3 \alpha ^2+2) +4\al^3(\al^2+2)\big]+2 \alpha ^2 (t+\alpha)^2\Big\}^2=4 (n+\alpha)^2 \left[(n+\al)^2+t(t+2\al)-2 t \sigma_n'\right]\nonumber\\
&\times&\Big\{t^2 \sigma _n''-t (2 n^2+4 n\alpha +1-2 \sigma _n)\s_n'-\left[(n+\al)^2+t(t+2\al)\right]\s_n+n^4+4   n^3\alpha+\alpha  (t+\alpha)\nonumber\\
&+&n^2 (t^2+2   t\alpha+5 \alpha ^2+1)+2  n\alpha  \left[(t+\al)^2+1\right]\Big\}^2,
\eea
\end{small}
and also admits the following second-order nonlinear difference equation
\begin{small}
\bea\label{snd}
&&\Big\{\big[n (n+2 \alpha-2 t)-\sigma_n\big]f(\s_n,\s_{n\pm 1})-2nt(2n-1+2\al)(2n+1+2\al)\Big\}^2\nonumber\\
&+&4t(n+\al)g(\s_n,\s_{n\pm 1})\Big\{\big[n (n+2 \alpha-2 t)-\sigma_n\big]f(\s_n,\s_{n\pm 1})-2nt(2n-1+2\al)(2n+1+2\al)\Big\}\nonumber\\
&-&4(n+\al)^2(\s_{n-1}-\s_{n})(\s_n-\s_{n+1})(n^2+2n\al-\s_n)g(\s_n,\s_{n\pm 1})=0,
\eea
\end{small}
where
\begin{small}
$$
f(\s_n,\s_{n\pm 1}):=( 2 n+1+2 \alpha)\sigma_{n-1}-(2n-1+2 \alpha ) \sigma_{n+1}-\sigma_{n-1}\sigma_{n+1}
-\s_n^2+\sigma_n (\sigma_{n-1}+\sigma_{n+1}-2),
$$
$$
g(\s_n,\s_{n\pm 1}):=(2n-1+2\al+\s_{n-1}-\s_n)(2n+1+2\al+\s_{n}-\s_{n+1}).
$$
\end{small}
\end{theorem}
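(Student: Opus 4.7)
The plan is to derive both equations by eliminating the auxiliary quantities $\beta_n, r_n, R_n, R_{n-1}$ from the algebraic--differential system established in Sections 2 and 3. The strategy parallels that of \cite{MLC} and splits naturally into two cases.

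For the difference equation (\ref{snd}), the starting point is the telescoping identity $\sigma_n = -\sum_{j=0}^{n-1} R_j(t)$ noted in the paper, which yields $R_n = \sigma_n - \sigma_{n+1}$ and $R_{n-1} = \sigma_{n-1} - \sigma_n$. One then uses (\ref{se}) to write
\[
\beta_n = \frac{n(n+2\alpha - 2t) - 2(n+\alpha) r_n - \sigma_n}{(2n+1+2\alpha)(2n-1+2\alpha)},
\]
and substitutes into (\ref{s2p1}) and (\ref{s2p2}). A direct computation shows that using (\ref{s2p1}) to eliminate $r_n^2$ from (\ref{s2p2}) produces an equation \emph{linear} in $r_n$, namely $\beta_n f = 2(n+\alpha) r_n + 2nt$, where $f = c R_{n-1} + d R_n + R_n R_{n-1}$ with $c = 2n+1+2\alpha$ and $d = 2n-1+2\alpha$. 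Combined with the expression for $\beta_n$ above and the identity $g = cd + f = (d + R_{n-1})(c + R_n)$, this gives
\[
2(n+\alpha)\, g\, r_n = \bigl[n(n+2\alpha - 2t) - \sigma_n\bigr] f - 2nt\, cd.
\]
Substituting this expression for $r_n$ back into (\ref{s2p1}) and clearing denominators produces (\ref{snd}); the polynomials $f$ and $g$ agree with those in the statement after expansion in $\sigma_n, \sigma_{n\pm 1}$.

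For the differential equation (\ref{ode}), I would replace shifts in $n$ by derivatives in $t$. The Riccati equation (\ref{ric1}) gives $\beta_n R_{n-1} = r_n(r_n + 2t)/R_n$ explicitly as a polynomial in $r_n, r_n', t$; combining with (\ref{s2p2}) then yields $\beta_n R_n$ similarly, after which (\ref{s2p1}) produces $\beta_n$ itself as a rational function $\mathcal{B}(r_n, r_n', t)$. Substituting into (\ref{se}) expresses $\sigma_n$ as a rational function $F(r_n, r_n', t)$. Differentiating with respect to $t$ and using the second-order ODE (\ref{rn}) to eliminate $r_n''$ yields $\sigma_n' = G(r_n, r_n', t)$; one further differentiation together with (\ref{rn}) gives $\sigma_n'' = H(r_n, r_n', t)$. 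Eliminating the two variables $r_n, r_n'$ from these three rational relations yields a single equation in $\sigma_n, \sigma_n', \sigma_n''$, which after clearing denominators is (\ref{ode}).

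The principal obstacle is the algebraic bulk of the elimination step for the ODE: the intermediate rational expressions have moderately large degree in $r_n$ and $r_n'$, and obtaining the compact polynomial form of (\ref{ode}) — quadratic in $\sigma_n''$ and $\sigma_n'$ — essentially requires a computer algebra system, especially at the final resultant-taking step that removes $r_n$ and $r_n'$. By contrast, the derivation of (\ref{snd}) is clean once one spots the key structural fact that a suitable combination of (\ref{s2p1}) and (\ref{s2p2}) is linear in $r_n$; this observation motivates the introduction of $f$ and $g$ and fixes the polynomial form of the difference equation.
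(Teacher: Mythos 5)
Your proposal is correct in substance, and it is worth noting that the paper gives no proof of this theorem at all (it defers to the method of \cite{MLC}), so your write-up fills a genuine gap. The derivation of the difference equation (\ref{snd}) is exactly the intended argument and checks out completely: with $R_n=\s_n-\s_{n+1}$, $R_{n-1}=\s_{n-1}-\s_n$, subtracting (\ref{s2p1}) from (\ref{s2p2}) gives $\bt_nf=2(n+\al)r_n+2nt$ with $f=cR_{n-1}+dR_n+R_nR_{n-1}$, $c=2n+1+2\al$, $d=2n-1+2\al$; combining with the expression for $\bt_n$ from (\ref{se}) and the identity $g=cd+f$ yields $2(n+\al)g\,r_n=[n(n+2\al-2t)-\s_n]f-2nt\,cd$, and --- a small point you leave implicit but which is what makes the third term of (\ref{snd}) carry only a single factor of $g$ --- also $\bt_n g=n(n+2\al)-\s_n$, so that clearing denominators in (\ref{s2p1}) lands exactly on (\ref{snd}). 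For the ODE (\ref{ode}) your elimination runs in the opposite direction from the \cite{MLC} template: you parametrize $\s_n,\s_n',\s_n''$ as rational functions of $(r_n,r_n')$ via (\ref{ric1}), (\ref{s2p1})--(\ref{s2p2}) and (\ref{rn}), and then eliminate two variables from three relations by resultants. This is legitimate, and your intermediate expressions for $\bt_nR_{n-1}$, $\bt_nR_n$ and $\bt_n$ are right, but it is the heavy way round: the usual route solves (\ref{se}) together with its $t$-derivative (using (\ref{ric1}), (\ref{ric2}) and (\ref{eq2})) for the auxiliary quantities in terms of $\s_n$ and $\s_n'$, substitutes into the quadratic identity (\ref{s2p1}) or (\ref{s2p2}), and clears a single square root --- which is precisely why (\ref{ode}) has the shape $\{A\}^2=4(n+\al)^2\,B\,\{C\}^2$ with $B$ linear in $\s_n'$. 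Your double resultant will reproduce (\ref{ode}) but generically multiplied by extraneous factors that must be identified and discarded, so if you carry this out in a computer algebra system you should verify at the end that the surviving factor is the stated one, e.g.\ by testing it against the explicit $n=0,1$ data coming from (\ref{R0}).
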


\begin{theorem}
The quantity $\s_n(t)$ can be expressed in terms of the $\s$-function of a Painlev\'{e} V as follows:
$$
\s_{2n}(t)=2\tilde{H}_{n}\left(t,\al,\frac{1}{2}\right)+2\tilde{H}_{n}\left(t,\al,-\frac{1}{2}\right)+4n(n+\al),
$$
$$
\s_{2n+1}(t)=2\tilde{H}_{n}\left(t,\al,\frac{1}{2}\right)+2\tilde{H}_{n+1}\left(t,\al,-\frac{1}{2}\right)+(2n+1)(2n+1+2\al).
$$
Here $\tilde{H}_{n}(t,\al,\bt)$ satisfies the Jimbo-Miwa-Okamoto $\s$-form of Painlev\'{e} V \cite[(C.45)]{Jimbo1981},
$$
\big(t\tilde{H}_{n}''\big)^2=\big[\tilde{H}_{n}-t\tilde{H}_{n}'+2\big(\tilde{H}_{n}'\big)^2+(\nu_{0}+\nu_{1}+\nu_{2}+\nu_{3})\tilde{H}_{n}'\big]^2-4\big(\nu_{0}+\tilde{H}_{n}'\big)
\big(\nu_{1}+\tilde{H}_{n}'\big)\big(\nu_{2}+\tilde{H}_{n}'\big)\big(\nu_{3}+\tilde{H}_{n}'\big),
$$
with parameters $\nu_{0}=0,\; \nu_{1}=-(n+\al+\bt),\; \nu_{2}=n,\; \nu_{3}=-\bt$.
\end{theorem}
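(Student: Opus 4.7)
The plan is to exploit the evenness of the weight (\ref{wei}) to factor the Hankel determinant $D_n(t)$ as a product of two Hankel determinants over $[0,1]$, and then to invoke the Painlev\'{e} V description of the Chen-Dai Pollaczek-Jacobi Hankel determinant from \cite{ChenDai} for each factor separately. Combining the two contributions and matching constants will then yield the stated $\s$-function representation.

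The first step is to use the vanishing of the odd moments in (\ref{mom}). After permuting rows and columns so that even indices precede odd ones, the Hankel matrix $(\mu_{i+j})$ becomes block-diagonal, giving the standard splitting
\be\label{split}
D_{2n}(t)=D_n^{(e)}(t)\,D_n^{(o)}(t),\qquad D_{2n+1}(t)=D_{n+1}^{(e)}(t)\,D_n^{(o)}(t),
\ee
with $D_n^{(e)}(t):=\det(\mu_{2i+2j}(t))_{i,j=0}^{n-1}$ and $D_n^{(o)}(t):=\det(\mu_{2i+2j+2}(t))_{i,j=0}^{n-1}$. Changing the integration variable $y=x^2$ in the moments shows that $D_n^{(e)}(t)$ is the $[0,1]$-Hankel determinant of the weight $y^{-1/2}(1-y)^\al\mathrm{e}^{-t/(1-y)}$, while $D_n^{(o)}(t)$ is that of $y^{1/2}(1-y)^\al\mathrm{e}^{-t/(1-y)}$. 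The reflection $u=1-y$ then carries these to the Pollaczek-Jacobi weight $u^\al(1-u)^\bt\mathrm{e}^{-t/u}$ on $[0,1]$ investigated in \cite{ChenDai}, with $\bt=-\tfrac{1}{2}$ in the even case and $\bt=\tfrac{1}{2}$ in the odd case. Since the Hankel determinant is invariant under such a reflection of the support (the monic orthogonal polynomials transform as $\tilde{P}_j(u)=(-1)^jP_j(1-u)$, leaving the $h_j$'s unchanged), the factors in (\ref{split}) coincide identically with the Chen-Dai Hankel determinants at these two values of $\bt$.

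Next I would apply the main result of \cite{ChenDai}, which states that for the Pollaczek-Jacobi weight a suitably normalized log-derivative of the Hankel determinant, denoted here by $\tilde H_n(t,\al,\bt)$, differs from $t\frac{d}{dt}\ln D_n^{\mathrm{PJ}}(t;\al,\bt)$ by an explicit polynomial in $t$ and satisfies the Jimbo-Miwa-Okamoto $\s$-form of Painlev\'{e} V with parameters $(\nu_0,\nu_1,\nu_2,\nu_3)=(0,-(n+\al+\bt),n,-\bt)$. Inserting this into the definition (\ref{def}) $\s_n(t)=2t\frac{d}{dt}\ln D_n(t)$ and using (\ref{split}) immediately produces the two claimed formulas: for $\s_{2n}(t)$ both factors carry subscript $n$, while for $\s_{2n+1}(t)$ the $\bt=-\tfrac{1}{2}$ piece picks up subscript $n+1$ from the even block and the $\bt=\tfrac{1}{2}$ piece retains subscript $n$ from the odd block, which exactly matches the asymmetric indexing in the statement.

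The main obstacle is purely the bookkeeping of constants. One has to determine precisely the additive polynomial shift that relates $\tilde H_n(t,\al,\bt)$ to $t\frac{d}{dt}\ln D_n^{\mathrm{PJ}}(t;\al,\bt)$, as this is what ultimately produces the constant terms $4n(n+\al)$ and $(2n+1)(2n+1+2\al)$ in the formulas for $\s_{2n}$ and $\s_{2n+1}$, respectively. A secondary issue is to verify that the Chen-Dai analysis remains valid at $\bt=-\tfrac{1}{2}$, which lies slightly outside their stated positivity range $\bt>0$ but well inside the integrability range $\bt>-1$; this should follow either by repeating their ladder-operator derivation (which only requires $\bt>-1$) or by analytic continuation in $\bt$.
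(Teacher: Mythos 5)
Your proposal is correct and is essentially the paper's own (omitted) argument: the authors defer the proof to the method of \cite{MLC}, which is precisely this even--odd splitting of the Hankel determinant of a symmetric weight followed by the substitutions $y=x^2$ and $u=1-y$ reducing each block to the Chen--Dai weight $u^{\al}(1-u)^{\mp 1/2}\mathrm{e}^{-t/u}$, together with an appeal to their Painlev\'e V $\sigma$-form result. The bookkeeping you flag does close: taking $\tilde{H}_{n}(t,\al,\bt)=t\frac{d}{dt}\ln D_{n}^{\mathrm{PJ}}(t;\al,\bt)-n(n+\al+\bt)$, the shifts sum to $4n(n+\al)$ for $\s_{2n}$ and to $(2n+1)(2n+1+2\al)$ for $\s_{2n+1}$, exactly the constants in the statement.
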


\section{Asymptotics of the Recurrence Coefficient, Sub-leading Coefficient and the Hankel Determinant}
Based on Dyson's Coulomb fluid approach \cite{Dyson}, for sufficiently large $n$, the eigenvalues of the $n\times n$ Hermitian matrices from a unitary ensemble with weight $w(x)$ can be approximated as a continuous fluid with a density $\sigma(x)$ supported in $J$ (a subset of $\mathbb{R}$).

Following Chen and Ismail \cite{ChenIsmail}, the equilibrium density $\sigma(x)$ is obtained by minimizing the free energy functional
\be\label{fe1}
F[\s]:=\int_{J}\s(x)\mathrm{v}(x)dx-\int_{J}\int_{J}\s(x)\ln|x-y|\s(y)dxdy
\ee
subject to
\be\label{con}
\int_{J}\s(x)dx=n.
\ee
Here $\mathrm{v}(x)=-\ln w(x)$ is the potential.

Upon minimization, the density $\s(x)$ is found to satisfy the integral equation
\be\label{ie}
\mathrm{v}(x)-2\int_{J}\ln|x-y|\s(y)dy=A,\qquad x\in J,
\ee
where $A$ is the Lagrange multiplier for the constraint (\ref{con}). Note that $A$ is a constant independent of $x$ but it depends on $n$ and the parameters in $\mathrm{v}(x)$.

By taking a derivative with respect to $x$, equation (\ref{ie}) is converted into the following singular integral equation,
\be\label{sie}
\mathrm{v}'(x)-2P\int_{J}\frac{\sigma(y)}{x-y}dy=0,\qquad x\in J,
\ee
where $P$ denotes the Cauchy principal value.

When the potential $\mathrm{v}(x)$ is convex and $\mathrm{v}''(x)>0$ in a set of positive measure, $\sigma(x)$ is supported in a single interval $(a,b)$ \cite{ChenIsmail}. In this case, the solution of (\ref{sie}) subject to the boundary condition $\sigma(a)=\sigma(b)=0$ reads,
\be\label{sigma}
\sigma(x)=\frac{\sqrt{(b-x)(x-a)}}{2\pi^2}P\int_{a}^{b}\frac{\mathrm{v}'(x)-\mathrm{v}'(y)}{(x-y)\sqrt{(b-y)(y-a)}}dy
\ee
with two supplementary conditions
\be\label{sup1}
\int_{a}^{b}\frac{\mathrm{v}'(x)}{\sqrt{(b-x)(x-a)}}dx=0,
\ee
\be\label{sup2}
\int_{a}^{b}\frac{x\:\mathrm{v}'(x)}{\sqrt{(b-x)(x-a)}}dx=2\pi n.
\ee
The endpoints of the support of the density, $a$ and $b$, are determined by (\ref{sup1}) and (\ref{sup2}). So, $a, b$, and then $\s(x)$ and $F[\s]$ all depend on $n$. Moreover, $F[\s]$ and $A$ have the following relation \cite[(2.14)]{ChenIsmail}
\be\label{fa}
\frac{\partial F}{\partial n}=A.
\ee

The rest of this section is devoted to derive the large $n$ asymptotics of the recurrence coefficient, the sub-leading coefficient for the monic orthogonal polynomials and the associated Hankel determinant with our weight (\ref{wei}). Note that, the asymptotic expansions in the following discussions are only valid for $t>0$.

For our problem, it is easy to see that the potential $\mathrm{v}(x)$ in (\ref{pt}) is even and
$$
\mathrm{v}''(x)=\frac{2\left[\al(1-x^4)+t(1+3x^2)\right]}{(1-x^2)^3}>0,\qquad x\in (-1,1).
$$
This leads to the fact that the support should be a symmetric single interval, i.e., $a=-b,\; 0<b<1$.
From (\ref{sigma}) and using (\ref{vp}), we find after some elementary computations,
\be\label{sexp}
\s(x)=\frac{\sqrt{b^2-x^2} \left[2t-b^2 t(1+x^2)+2 \alpha (1-b^2)(1-x^2)\right]}{2 \pi  (1-b^2)^{3/2} (1-x^2)^2}.
\ee
Next, we evaluate the Lagrange multiplier $A$ in the following lemma. Note that we will not use the expression of $\s(x)$ in (\ref{sexp}) to compute $A$.
\begin{lemma}
We have
\be\label{ae}
A=\frac{t}{\sqrt{1-b^2}}-2n \ln\frac{b}{2}+2\al\ln\frac{2}{1+\sqrt{1-b^2}}.
\ee
\end{lemma}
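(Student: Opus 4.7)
My plan is to evaluate the equilibrium condition (\ref{ie}) at the symmetric interior point $x=0$. Since the support is the symmetric interval $[-b,b]$ and $\mathrm{v}(0)=t$, this collapses the integral equation to
\begin{equation*}
A=t-2\int_{-b}^{b}\ln|y|\,\s(y)\,dy=t-4\int_{0}^{b}\ln y\,\s(y)\,dy,
\end{equation*}
so the whole proof reduces to a closed-form evaluation of the single integral on the right. Splitting $\ln y=\ln b+\ln(y/b)$ and invoking the normalization $\int_{-b}^{b}\s(y)\,dy=n$ from (\ref{con}) instantly accounts for the $-2n\ln b$ piece of the expected answer.

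\textbf{Reduction to master integrals.} Setting $\gamma:=\sqrt{1-b^{2}}$, a partial-fraction decomposition of the bracket in (\ref{sexp}) turns the density into
\begin{equation*}
\s(y)=\frac{\sqrt{b^{2}-y^{2}}}{\pi\gamma^{3}}\left[\frac{t\gamma^{2}}{(1-y^{2})^{2}}+\frac{\al\gamma^{2}+b^{2}t/2}{1-y^{2}}\right].
\end{equation*}
After the substitution $y=b\sin\theta$, the remaining piece $\int_{0}^{b}\ln(y/b)\,\s(y)\,dy$ is expressed in terms of two master trigonometric integrals
\begin{equation*}
M_{k}=\int_{0}^{\pi/2}\frac{\cos^{2}\theta\,\ln\sin\theta}{(1-b^{2}\sin^{2}\theta)^{k}}\,d\theta,\qquad k=1,2.
\end{equation*}
To handle these I would first prove the auxiliary identity
\begin{equation*}
\int_{0}^{\pi/2}\ln\!\left(1-k^{2}\sin^{2}\theta\right)d\theta=\pi\ln\frac{1+\sqrt{1-k^{2}}}{2}
\end{equation*}
by differentiating under the integral in $k$ and applying the substitution $v=\sqrt{1-u^{2}}$. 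Paired with Wallis' $\int_{0}^{\pi/2}\ln\sin\theta\,d\theta=-(\pi/2)\ln 2$ and the familiar closed forms of $\int_{0}^{\pi/2}(1-b^{2}\sin^{2}\theta)^{-k}\,d\theta$ for $k=1,2$, this suffices to express $M_{1}$ and $M_{2}$ as explicit linear combinations of $\ln 2$, $\ln(1+\gamma)$ and $1/\gamma$.

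\textbf{Main obstacle and assembly.} Plugging $M_{1},M_{2}$ back, the raw combination contains $1/\gamma^{3}$ terms that do not cancel on their own; the decisive step is to invoke the supplementary condition (\ref{sup2}), which under the symmetric support $[-b,b]$ reduces to
\begin{equation*}
n=\al\!\left(\tfrac{1}{\gamma}-1\right)+\frac{tb^{2}}{2\gamma^{3}}.
\end{equation*}
Solving for $tb^{2}/(2\gamma^{3})$ and substituting collapses the $1/\gamma^{3}$ remnants into the single $t/\gamma$ appearing in (\ref{ae}), while simultaneously consolidating the logarithmic pieces into $2(n+\al)\ln 2-2n\ln b-2\al\ln(1+\gamma)$, which rearranges exactly to $-2n\ln(b/2)+2\al\ln(2/(1+\gamma))$. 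The chief difficulty is this bookkeeping of simultaneous cancellations among three independent logarithmic coefficients and the non-logarithmic $1/\gamma$ piece; one has to carry the partial-fraction bookkeeping through the trigonometric evaluation without losing track of signs. A cleaner alternative, which bypasses $M_{1}$ and $M_{2}$ entirely, is to compute the Stieltjes transform $H(z)=\int\s(y)(z-y)^{-1}\,dy$ by solving its scalar Riemann--Hilbert problem (jump $H_{+}+H_{-}=\mathrm{v}'$ on $(-b,b)$, prescribed poles at $z=\pm 1$, and $H(z)\sim n/z$ at $\infty$) and then recover $\int\ln|y|\,\s(y)\,dy$ by contour-integrating $H(z)\log z$ around a dogbone contour enclosing $[-b,b]$.
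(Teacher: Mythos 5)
Your route is genuinely different from the paper's, and in outline it does close: evaluating (\ref{ie}) at $x=0$ gives $A=t-2\int_{-b}^{b}\ln|y|\,\sigma(y)\,dy$, and with $\gamma=\sqrt{1-b^2}$ one indeed finds $\int_{-b}^{b}\ln|y|\,\sigma(y)\,dy=n\ln b+\alpha\ln(1+\gamma)-\frac{t}{2\gamma}+\frac{t}{2}-(n+\alpha)\ln 2$, which reproduces (\ref{ae}). Your partial-fraction form of (\ref{sexp}) is correct, your restatement of the endpoint condition is the paper's equation for $b$, and the identity $\frac{\alpha}{\gamma}+\frac{tb^{2}}{2\gamma^{3}}=n+\alpha$ is exactly what collapses the $1/\gamma^{3}$ debris. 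The contrast with the paper is instructive: rather than evaluating (\ref{ie}) at one point, the paper integrates it against the arcsine density $\frac{1}{\pi\sqrt{b^2-x^2}}$ over $(-b,b)$ and exploits the fact that $\int_{-b}^{b}\frac{\ln|x-y|}{\sqrt{b^2-x^2}}\,dx=\pi\ln\frac{b}{2}$ is independent of $y\in(-b,b)$, so the double integral collapses using only the normalization $\int\sigma=n$. That proof never touches the explicit density (\ref{sexp}) nor the condition (\ref{sup2}); yours requires both plus nontrivial log-trigonometric integrals. Your method buys nothing extra here, but it does serve as an independent consistency check of (\ref{sexp}).

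The one concrete gap is in your evaluation of the master integrals. The identity $\int_{0}^{\pi/2}\ln(1-k^{2}\sin^{2}\theta)\,d\theta=\pi\ln\frac{1+\sqrt{1-k^{2}}}{2}$ has $\ln(1-k^{2}\sin^{2}\theta)$ in the integrand; differentiating it in $k$ produces $\int_{0}^{\pi/2}\frac{\sin^{2}\theta}{1-k^{2}\sin^{2}\theta}\,d\theta$ and never an integrand containing $\ln\sin\theta$. So that identity, together with Wallis and the elementary $\int_{0}^{\pi/2}(1-b^{2}\sin^{2}\theta)^{-k}\,d\theta$, does not determine $M_{1}$ and $M_{2}$. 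What you actually need is $N_{1}:=\int_{0}^{\pi/2}\frac{\ln\sin\theta}{1-b^{2}\sin^{2}\theta}\,d\theta=-\frac{\pi}{2\gamma}\ln(1+\gamma)$, obtainable from the Fourier expansion $\ln\sin\theta=-\ln 2-\sum_{m\geq 1}\frac{\cos 2m\theta}{m}$ together with $\int_{0}^{\pi/2}\frac{\cos 2m\theta}{1-b^{2}\sin^{2}\theta}\,d\theta=\frac{\pi}{2\gamma}\bigl(-\frac{1-\gamma}{1+\gamma}\bigr)^{m}$, or from the tables in \cite{Gradshteyn}. Once $N_{1}$ is supplied, $M_{1}=\frac{1}{b^{2}}\bigl(-\frac{\pi}{2}\ln 2-\gamma^{2}N_{1}\bigr)$ and $M_{2}=\frac{1}{b^{2}}\bigl(N_{1}-\gamma^{2}N_{2}\bigr)$ with $N_{2}=N_{1}+b^{2}\,\partial N_{1}/\partial(b^{2})$, and your argument goes through.
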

\begin{proof}
We start from writing (\ref{ie}) as
$$
\mathrm{v}(x)-2\int_{-b}^b\ln|x-y|\s(y)dy=A.
$$
Multiplying both sides by $\frac{1}{\sqrt{b^2-x^2}}$ and integrating from $-b$ to $b$ give rise to
\be\label{af}
\int_{-b}^{b}\frac{\mathrm{v}(x)}{\sqrt{b^2-x^2}}dx-2\int_{-b}^{b}dy\:\s(y)\int_{-b}^{b}\frac{\ln|x-y|}{\sqrt{b^2-x^2}}dx=\pi A,
\ee
where use has been made of the integral formula
$$
\int_{-b}^{b}\frac{dx}{\sqrt{b^2-x^2}}=\pi.
$$
Note that
\be\label{cons}
\int_{-b}^{b}\frac{\ln|x-y|}{\sqrt{b^2-x^2}}dx=C,\qquad y\in (-b,b),
\ee
where $C$ is a constant independent of $y$. This is because we have the formula
$$
P\int_{-b}^{b}\frac{1}{(x-y)\sqrt{b^2-x^2}}dx=0.
$$
Hence, we can replace $y$ by $b$ in (\ref{cons}) to compute $C$:
\bea\label{cons1}
C&=&\int_{-b}^{b}\frac{\ln(b-x)}{\sqrt{b^2-x^2}}dx\nonumber\\
&=&\int_{0}^{1}\frac{\ln[2b(1-s)]}{\sqrt{s(1-s)}}ds\nonumber\\
&=&\pi\ln\frac{b}{2},
\eea
where we have used the formula
$$
\int_{0}^{1}\frac{\ln(1-s)}{\sqrt{s(1-s)}}ds=-2\pi\ln 2.
$$
With $\mathrm{v}(x)=\frac{t}{1-x^{2}}-\al\ln(1-x^2)$, we find after some elementary computations,
\be\label{vf}
\int_{-b}^{b}\frac{\mathrm{v}(x)}{\sqrt{b^2-x^2}}dx=\frac{\pi \: t}{\sqrt{1-b^2}}+2 \pi \alpha\:  \ln \left(\frac{2}{1+\sqrt{1-b^2}}\right).
\ee
Substituting (\ref{cons}), (\ref{cons1}), (\ref{vf}) into (\ref{af}), and using the fact (\ref{con}), we establish the lemma.
\end{proof}
\begin{remark}
The integral identities used in the proof of the above lemma can be found in \cite{Gradshteyn}.
\end{remark}
In the following, we would like to derive the result of $b$ in our problem. Note that equation (\ref{sup1}) always holds in the even potential case, and (\ref{sup2}) becomes
$$
\int_{-b}^{b}\frac{2x^2\left[t+\al(1-x^2)\right]}{(1-x^2)^2\sqrt{b^2-x^2}}dx=2\pi n.
$$
This gives an equation satisfied by $b$:
$$
\frac{b^2 t+2\al(1-b^2)\left(1-\sqrt{1-b^2}\right)}{(1-b^2)^{\frac{3}{2}}}=2n.
$$
Let
\be\label{ud}
u=\sqrt{1-b^2}.
\ee
Then $u$ satisfies the following cubic equation
$$
2(n+\al)u^3+(t-2\al)u^2-t=0.
$$
It has only one real solution,
\be\label{u}
u=\frac{1}{6(n+\al)}\left[2\al-t+\sqrt[3]{\xi}+\frac{(2\al-t)^2}{\sqrt[3]{\xi}}\right],
\ee
where
\begin{small}
$$
\xi=8 \alpha ^3+6t \left(9 n^2+18  n\alpha+7 \alpha ^2\right)+6   t^2\alpha-t^3+6(n+\al)\sqrt{3t\left[27n(n+2\al) t-(t-8\al)(t+\al)^2\right]}.
$$
\end{small}
\quad Substituting the expression of $b$ in terms of $n,\; t$ and $\al$ from (\ref{ud}) and (\ref{u}) into (\ref{ae}) and letting $n\rightarrow\infty$, we find
\bea\label{A}
A&=&n \ln 4+\frac{3 t^{2/3} \sqrt[3]{n}}{2^{2/3}}+\alpha  \ln 4+\frac{ \sqrt[3]{t}\: (t-8 \alpha )}{4 \sqrt[3]{2n}}+\frac{t^{2/3}\alpha}{(2n)^{2/3}}-\frac{\alpha ^2}{3 n}-\frac{ \alpha  \sqrt[3]{t}\: (t-8 \alpha )}{12 \sqrt[3]{2}\:n^{4/3}}\nonumber\\
&-&\frac{5 t^3-48   t^2\alpha+960\alpha^2 t+320 \alpha ^3}{2160\times 2^{2/3} \sqrt[3]{t}\:n^{5/3}}+\frac{\alpha ^3}{3 n^2}+O(n^{-7/3}).
\eea
Then, it follows from (\ref{fa}) that the free energy $F[\s]$ has the following expansion as $n\rightarrow\infty$,
\bea\label{fe}
F[\s]&=&n^2 \ln 2+\frac{9\:  t^{2/3}n^{4/3}}{4\times2^{2/3}}+n\al\ln 4+\frac{3  \sqrt[3]{t}\: (t-8 \alpha )n^{2/3}}{8 \sqrt[3]{2}}+\frac{3  t^{2/3}\al\sqrt[3]{n}}{2^{2/3}}-\frac{\alpha ^2}{3}\ln n \nonumber\\
&+&C_0(t,\al)+\frac{\alpha  \sqrt[3]{t}\: (t-8 \alpha )}{4 \sqrt[3]{2n}}+\frac{5 t^3-48  t^2\al+960\alpha ^2 t+320 \alpha^3}{1440 \sqrt[3]{t} \:(2n)^{2/3}}+O(n^{-1}),
\eea
where $C_0(t,\al)$ is a constant independent of $n$.

According to formula (2.27) in \cite{ChenIsmail}, we have
$$
\bt_n=\frac{b^2}{4}\left(1+O\left(\frac{\partial^{4}F}{\partial n^{4}}\right)\right)=\frac{1-u^2}{4}\left(1+O\left(\frac{\partial^{3}A}{\partial n^{3}}\right)\right),\qquad n\rightarrow\infty,
$$
where use has been made of (\ref{fa}).
Taking account of (\ref{u}), we find that
\bea
\frac{1-u^2}{4}
&=&\frac{1}{4}-\frac{t^{2/3}}{4\times(2n)^{2/3}}+\frac{ \sqrt[3]{t} (t-2 \alpha )}{12 \sqrt[3]{2}\:n^{4/3}}+\frac{t^{2/3}\alpha}{6\times 2^{2/3}\:n^{5/3}}-\frac{(t-2 \alpha )^2}{48 n^2}\nonumber\\
&-&\frac{ \alpha  \sqrt[3]{t} (t-2 \alpha )}{9 \sqrt[3]{2}\:n^{7/3}}+\frac{5  (t^3-6t^2\alpha-6  t\alpha ^2-8 \alpha ^3)}{648\times 2^{2/3} \sqrt[3]{t}\:n^{8/3}}+O\left(\frac{1}{n^3}\right), \qquad n\rightarrow\infty.\nonumber
\eea
In view of (\ref{A}), it follows that $\bt_n$ has an expansion of the form
\be\label{exp}
\bt_n=a_0+\sum_{j=1}^{\infty}\frac{a_j}{n^{j/3}}, \qquad\qquad n\rightarrow\infty,
\ee
where
$$
a_0=\frac{1}{4},
$$
and $a_j,\; j=1,2,\ldots$, are the expansion coefficients to be determined. By using the second-order difference equation satisfied by $\bt_n$, we obtain the complete asymptotic expansion of $\bt_n$ in the next theorem.
\begin{theorem}\label{thm}
The recurrence coefficient $\bt_n(t)$ has the following large $n$ expansion:
\be\label{bte}
\bt_n(t)=\frac{1}{4}+\sum_{j=1}^{\infty}\frac{a_j}{n^{j/3}}, \qquad\qquad n\rightarrow\infty,
\ee
where the first few terms of expansion coefficients are
\bea
&&a_1=0,\qquad\qquad\qquad\qquad\qquad a_2=-\frac{t^{2/3}}{4\times2^{2/3}},\nonumber\\
&&a_3=0,\qquad\qquad\qquad\qquad\qquad  a_4=\frac{\sqrt[3]{t} (t-2 \alpha )}{12 \sqrt[3]{2}},\nonumber\\
&&a_5=\frac{t^{2/3}\alpha}{6\times2^{2/3}},\qquad\qquad\qquad\quad a_6=\frac{5-3 (t-2 \alpha )^2}{144},\nonumber\\
&&a_7=-\frac{\alpha  \sqrt[3]{t} (t-2 \alpha )}{9 \sqrt[3]{2}},\qquad\qquad\: a_8=\frac{5 \left[2 t^3-12   t^2\alpha-t(12\alpha ^2+17)-16\al(\al^2-1)\right]}{1296\times2^{2/3} \sqrt[3]{t}},\nonumber
\eea
with more terms easily computable.
\end{theorem}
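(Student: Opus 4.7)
My plan is to take the form of the expansion already established by Dyson's Coulomb-fluid approach, namely
$$
\bt_n(t) = \frac{1}{4} + \sum_{j=1}^{\infty}\frac{a_j}{n^{j/3}}, \qquad n \to \infty,
$$
with leading term $a_0 = 1/4$ fixed by the relation $\bt_n \sim (1-u^2)/4$ obtained via (\ref{ud})--(\ref{u}). The coefficients $a_j$ for $j \ge 1$ I would determine by substituting this ansatz into the second-order nonlinear difference equation (\ref{btd}) satisfied by $\bt_n$, and then reading off the coefficients of the resulting power series in $n^{-1/3}$.

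Concretely, I would expand the neighbors $\bt_{n\pm 1}$ as formal series in $n^{-1/3}$ by writing
$$
\bt_{n\pm 1}(t) = \frac{1}{4} + \sum_{j=1}^{\infty}\frac{a_j}{n^{j/3}}\left(1 \pm \frac{1}{n}\right)^{-j/3},
$$
expanding each factor $(1 \pm 1/n)^{-j/3}$ via the generalized binomial theorem. The auxiliary quantities $\tilde{\bt}_{n\pm 1} = (2n \pm 3 + 2\al)\bt_{n\pm 1}$ then become formal series in $n^{-1/3}$ whose coefficients are polynomial functions of $\{a_j\}$, $t$, and $\al$. Substituting these expansions into (\ref{btd}), expanding both sides in powers of $n^{-1/3}$, and equating the two series term by term yields a sequence of algebraic equations for the $a_j$. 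The system is triangular: at each order $n^{-j/3}$ the resulting equation is linear in $a_j$ (with a non-vanishing leading coefficient) once $a_1, \ldots, a_{j-1}$ have been determined, so the $a_j$ can be read off one at a time. In particular, $a_1 = a_3 = 0$ should emerge automatically, consistent with the fact that the leading correction predicted by $(1-u^2)/4$ enters first at order $t^{2/3} n^{-2/3}$.

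The chief obstacle is purely computational rather than conceptual: equation (\ref{btd}) is a degree-six polynomial identity in $\bt_n$ and its neighbors, with one squared expression set equal to the product of two other squared expressions, and carrying the expansion through order $n^{-8/3}$ produces enormous intermediate expressions. I would therefore perform the manipulations in a computer-algebra system such as Mathematica or Maple. A useful internal consistency check is that the values of $a_2$, $a_4$, $a_5$, $a_6$ produced by the recursion must coincide with those read off directly from the explicit expansion of $(1-u^2)/4$ derived above from (\ref{u}); agreement at these orders validates both the ansatz and the binomial bookkeeping, after which the higher coefficients $a_7$, $a_8$, and beyond are determined unambiguously by the recursion without further input.
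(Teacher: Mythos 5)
Your proposal is essentially the paper's own proof: fix $a_0=\tfrac14$ from the Coulomb-fluid prediction, substitute the ansatz $\bt_n=\tfrac14+\sum_{j\ge1}a_j n^{-j/3}$ (with $\bt_{n\pm1}$ re-expanded binomially) into the difference equation (\ref{btd}), and set the coefficient of each power of $n^{1/3}$ to zero, determining the $a_j$ recursively. One small correction to your description of the structure: the first nontrivial equations are cubic rather than linear in the unknown coefficient ($e_1=0$ reads $256\,t^2a_1^3=0$ and $e_0=0$ reads $256\,t^2a_2^3+t^4=0$), so for $a_2$ one must select the real cube root --- which your proposed consistency check against the expansion of $(1-u^2)/4$ accomplishes --- and only from $a_3$ onward do the equations become genuinely linear with nonvanishing leading coefficient.
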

\begin{proof}
Substituting the expansion (\ref{exp}) into the difference equation (\ref{btd}), and taking a large $n$ limit, we have an expression of the form
$$
e_2 n^2+e_{5/3}n^{5/3}+e_{4/3}n^{4/3}+\sum_{j=-\infty}^{3}e_{j/3}n^{j/3}=0,
$$
where each $e_{j/3}$ depends on the expansion coefficients $a_j, t$ and $\al$. In order to satisfy the above equation, all the coefficients of powers of $n$ are identically zero.
The equation $e_2=0$ reads,
$$
-4(4 a_0-1)^3(16 a_0^3-8 a_0^2+a_0-t^2)=0,
$$
which holds identically by the fact $a_0=\frac{1}{4}$.

Setting $a_0=\frac{1}{4}$ leads to $e_{5/3}$ and $e_{4/3}$ vanishing identically. The equation $e_1=0$ gives rise to
$$
256t^2 a_1^3=0.
$$
Since $t>0$, we have
$$
a_1=0.
$$
Setting $a_0=\frac{1}{4}$ and $a_1=0$ leads to $e_{2/3}$ and $e_{1/3}$ vanishing identically. The equation $e_0=0$ gives
$$
256 t^2 a_2^3 +t^4=0,
$$
we find
$$
a_2=-\frac{t^{2/3}}{4\times2^{2/3}}.
$$
With the values of $a_0, a_1$ and $a_2$, the equation $e_{-1/3}=0$ shows
$$
12\times2^{2/3} t^{10/3}a_3 =0,
$$
we have
$$
a_3=0.
$$
With the above $a_0, a_1, a_2$ and $a_3$, the equation $e_{-2/3}=0$ gives rise to
$$
\sqrt[3]{2}\: t^{10/3} \left[12 \sqrt[3]{2}\: a_4-\sqrt[3]{t} (t-2 \alpha )\right]=0,
$$
we obtain
$$
a_4=\frac{\sqrt[3]{t} (t-2 \alpha )}{12 \sqrt[3]{2}}.
$$

This procedure can be easily extended to find higher coefficients $a_5, a_6, a_7,\ldots$ We only list some of them:
$$
a_5=\frac{t^{2/3}\alpha}{6\times2^{2/3}},
$$
$$
a_6=\frac{5-3 (t-2 \alpha )^2}{144},
$$
$$
a_7=-\frac{\alpha  \sqrt[3]{t} (t-2 \alpha )}{9 \sqrt[3]{2}},
$$
$$
a_8=\frac{5 \left[2 t^3-12   t^2\alpha-t(12\alpha ^2+17)-16\al(\al^2-1)\right]}{1296\times2^{2/3} \sqrt[3]{t}}.
$$
This completes the proof.
\end{proof}
\begin{remark}
The difference equation method has also been used to derive the large $n$ asymptotic expansion of the recurrence coefficient $\bt_n$ in the generalized Freud weight problems; see Clarkson and Jordaan \cite{Clarkson1,Clarkson2}.
\end{remark}

Since
\be\label{be3}
\bt_n=\mathrm{p}(n,t)-\mathrm{p}(n+1,t),
\ee
and in view of the asymptotic expansion of $\bt_n$ in (\ref{bte}), we have the following large $n$ expansion form for $\mathrm{p}(n,t)$:
\be\label{pne}
\mathrm{p}(n,t)=b_{-3} n+b_{-2} n^{2/3}+b_{-1} n^{1/3}+b_0+\sum_{j=1}^{\infty}\frac{b_j}{n^{j/3}}.
\ee
Substituting (\ref{pne}) into (\ref{be3}) and taking a large $n$ limit, we find $$b_{-3}=-\frac{1}{4}.$$
Using the second-order difference equation satisfied by $\mathrm{p}(n,t)$, we obtain the following result.
\begin{theorem}
The sub-leading coefficient $\mathrm{p}(n,t)$ has the following expansion as $n\rightarrow\infty$:
\be\label{pnte}
\mathrm{p}(n,t)=b_{-3} n+b_{-2} n^{2/3}+b_{-1} n^{1/3}+b_0+\sum_{j=1}^{\infty}\frac{b_j}{n^{j/3}},
\ee
where
\bea
&&b_{-3}=-\frac{1}{4},\qquad\qquad\qquad\qquad\qquad\qquad b_{-2}=0,\nonumber\\
&&b_{-1}=\frac{3\:t^{2/3}}{4\times2^{2/3}},\qquad\qquad\qquad\qquad\qquad  b_0=\frac{2 \alpha+1 -4 t}{8},\nonumber\\
&&b_1=\frac{\sqrt[3]{t}\: (t-2 \alpha )}{4 \sqrt[3]{2} },\qquad\qquad\qquad\quad\quad\:\:\:\:\: b_2=\frac{(2 \alpha -1) t^{2/3}}{8\times2^{2/3}},\nonumber\\
&&b_3=\frac{5-3(t-2\al)^2}{144},\qquad\qquad\qquad\qquad b_4=\frac{(2 \alpha -1) \sqrt[3]{t}\: (2 \alpha-t )}{24 \sqrt[3]{2}},\nonumber
\eea
are the expansion coefficients of the first few terms.
\end{theorem}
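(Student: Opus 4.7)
The plan is to mimic the proof of Theorem 4.2 and exploit the second-order difference equation (\ref{pnd}) satisfied by $\mathrm{p}(n,t)$, feeding in the ansatz (\ref{pne}) that has already been motivated by the relation $\beta_n = \mathrm{p}(n,t) - \mathrm{p}(n+1,t)$ together with Theorem 4.2. As a first justification of the shape of the expansion, I would note that, because $\beta_n \to \tfrac14$ and its corrections live in powers of $n^{-1/3}$, the telescopic identity (\ref{sum}) forces $\mathrm{p}(n,t)$ to grow linearly in $n$ with coefficient $-\tfrac14$, followed by sub-powers in $n^{1/3}$; comparing leading terms of $\mathrm{p}(n,t) - \mathrm{p}(n+1,t)$ against (\ref{bte}) immediately gives $b_{-3} = -\tfrac14$ and $b_{-2} = 0$, and in fact pins down every $b_j$ up to an additive constant.

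With that form in hand, I would substitute (\ref{pne}) into the difference equation (\ref{pnd}), Taylor-expand the shifted quantities $\tilde{\mathrm{p}}(n\pm1)$ in powers of $n^{-1/3}$, and equate coefficients of each power of $n$ to zero, exactly as in the proof of Theorem 4.2. The top-order equation determines $b_{-3}=-\tfrac14$; the next non-trivial balances pin down $b_{-2}, b_{-1}, b_0, \ldots$ one by one. The crucial point here, as compared with using $\beta_n$ alone, is that (\ref{pnd}) depends on $\mathrm{p}(n,t)$ itself and not only on its consecutive differences, so it fixes the otherwise undetermined additive constant $b_0 = (2\alpha + 1 - 4t)/8$. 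As a consistency check I would verify that the $b_j$ obtained in this manner satisfy $a_j = b_j - b_j^{\mathrm{shift}}$, where $b_j^{\mathrm{shift}}$ denotes the coefficient of $n^{-j/3}$ in the expansion of $\mathrm{p}(n+1,t)$ (itself obtained from (\ref{pne}) by the binomial expansion of $(n+1)^{-j/3}$); this reproduces Theorem 4.2 at each order and confirms the $b_j$ listed.

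The main obstacle I expect is purely the bookkeeping of the algebra. The difference equation (\ref{pnd}) is degree three in $\mathrm{p}(n,t)$ and its shifts, and after the substitution one must expand $(n\pm1)^{-j/3} = n^{-j/3}(1 \pm 1/n)^{-j/3}$ to a substantial depth in order to solve for $b_4$ and beyond; the resulting polynomial relations among $b_j, t$ and $\alpha$ grow rapidly and are best handled with a computer algebra system. A minor subtlety is that several low-order balances are satisfied identically once $b_{-3}=-\tfrac14$ is imposed (mirroring the vanishing of $e_{5/3}, e_{4/3}$ in the proof of Theorem 4.2), so one must be careful not to mistake them for genuine constraints; the first equation that truly determines a new $b_j$ occurs only a few orders below the leading one, and from there the recursion proceeds unambiguously.
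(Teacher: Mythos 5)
Your proposal follows essentially the same route as the paper: $b_{-3}=-\tfrac14$ is first fixed from $\beta_n=\mathrm{p}(n,t)-\mathrm{p}(n+1,t)$ and the known expansion of $\beta_n$, and then the remaining coefficients (including the additive constant $b_0$, which the telescoping relation alone cannot determine) are obtained by substituting the ansatz (\ref{pne}) into the difference equation (\ref{pnd}) and equating coefficients of powers of $n^{1/3}$ to zero, with several low-order balances vanishing identically just as you anticipate. The only minor imprecision is your claim that the top-order balance "determines" $b_{-3}$: in the paper that equation is $\tfrac43(4b_{-3}+1)^2 b_{-2}=0$, which is merely consistent with $b_{-3}=-\tfrac14$ rather than forcing it, but since you establish $b_{-3}$ independently beforehand this does not affect the argument.
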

\begin{proof}
Substituting (\ref{pne}) into the difference equation (\ref{pnd}) and letting $n\rightarrow\infty$, we have
$$
l_{8/3}n^{8/3}+l_{7/3}n^{7/3}+\sum_{j=-\infty}^{6}l_{j/3}n^{j/3}=0,
$$
where the expressions of $l_{j/3},\; j=8, 7, 6, \ldots$, depend on the expansion coefficients $b_j,\; t$ and $\al$. Then each $l_{j/3}$ should be identically zero. The equation $l_{8/3}=0$ reads,
$$
\frac{4}{3} (4 b_{-3}+1)^2\: b_{-2}=0,
$$
which holds identically by the fact that $b_{-3}=-\frac{1}{4}$. Setting $b_{-3}=-\frac{1}{4}$ leads to $l_{7/3}=0$ identically. Then $l_{2}=0$ shows
$$
\frac{400 }{27}b_{-2}^3=0.
$$
We have
$$
b_{-2}=0.
$$
With $b_{-3}=-\frac{1}{4}$ and $b_{-2}=0$, $l_{5/3}$ and $l_{4/3}$ vanish identically. The equation $l_1=0$ gives
$$
\frac{512 }{27}b_{-1}^3-2 t^2=0.
$$
We get
$$
b_{-1}=\frac{3\:t^{2/3}}{4\times2^{2/3}}.
$$
Following the same procedure in the proof of Theorem \ref{thm}, we can find higher coefficients easily. Finally we establish the theorem.
\end{proof}
\begin{remark}
For consistency, substituting (\ref{pnte}) into $\bt_n=\mathrm{p}(n,t)-\mathrm{p}(n+1,t)$ and taking a large $n$ limit, we find that $\bt_n$ has the same expansion as (\ref{bte}).
\end{remark}

Next, we consider $\s_n(t)$, which is related to the Hankel determinant in (\ref{def}). Before deriving the large $n$ asymptotics of $\s_n(t)$, we present a lemma below.
\begin{lemma}
The quantity $\s_n(t)$ can be expressed in terms of $\mathrm{p}(n,t)$ and $\mathrm{p}(n+1,t)$ as follows:
\be\label{sig}
\s_n(t)=-n(n+2t)-(2n-1+2\al)\mathrm{p}(n,t)-(2n+1+2\al)\mathrm{p}(n+1,t).
\ee
\end{lemma}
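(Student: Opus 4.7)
The plan is to chain together three identities already established in the excerpt and let the algebra collapse. Starting point is the observation noted just before the lemma statement, namely
$$
\s_n(t) = -\sum_{j=0}^{n-1} R_j(t),
$$
which follows from (\ref{hankel}), (\ref{def}) and (\ref{eq1}). The goal is to eliminate the sum of the $R_j$'s and then the auxiliary quantity $r_n(t)$, leaving only $\mathrm{p}(n,t)$ and $\mathrm{p}(n+1,t)$.

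First I would invoke identity (\ref{s2p3}), one of the three equations obtained from compatibility condition $(S_2')$, to solve for the telescoping sum:
$$
\sum_{j=0}^{n-1}R_j(t) = (2n+1+2\al)(2n-1+2\al)\bt_n - n(n+2\al-2t) + 2(n+\al)r_n(t).
$$
Inserting this into the expression for $\s_n(t)$ gives a formula involving only $\bt_n$ and $r_n(t)$. Next I would use (\ref{s2}), namely $r_n(t) = n - (2n+1+2\al)\bt_n + 2\mathrm{p}(n,t)$, to eliminate $r_n(t)$. The coefficient of $\bt_n$ then simplifies because
$$
-(2n+1+2\al)(2n-1+2\al) + 2(n+\al)(2n+1+2\al) = (2n+1+2\al)\bigl[2(n+\al)-(2n-1+2\al)\bigr] = 2n+1+2\al,
$$
and the purely $n$-dependent terms combine to $-n(n+2t)$. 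This yields the intermediate formula
$$
\s_n(t) = (2n+1+2\al)\bt_n - 4(n+\al)\mathrm{p}(n,t) - n(n+2t).
$$

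Finally I would substitute the fundamental relation (\ref{be1}), $\bt_n = \mathrm{p}(n,t) - \mathrm{p}(n+1,t)$. The coefficient of $\mathrm{p}(n,t)$ becomes $(2n+1+2\al) - 4(n+\al) = -(2n-1+2\al)$, while the coefficient of $\mathrm{p}(n+1,t)$ is $-(2n+1+2\al)$. This produces exactly (\ref{sig}). There is no genuine obstacle here: the proof is a purely algebraic consolidation, and the only mild bookkeeping point is verifying the cancellation $2(n+\al) - (2n-1+2\al) = 1$ that is responsible for the clean form of the final coefficients.
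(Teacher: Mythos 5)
Your proposal is correct and follows essentially the same route as the paper: the paper substitutes (\ref{s2}) into the already-established relation (\ref{se}) (which is precisely your combination of (\ref{s2p3}) with $\s_n(t)=-\sum_{j=0}^{n-1}R_j(t)$) to reach the intermediate identity $\s_n(t)=-n(n+2t)+(2n+1+2\al)\bt_n-4(n+\al)\mathrm{p}(n,t)$, and then applies $\bt_n=\mathrm{p}(n,t)-\mathrm{p}(n+1,t)$. Your algebraic checks of the coefficient cancellations are all accurate.
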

\begin{proof}
Substituting (\ref{s2}) into (\ref{se}) to eliminate $r_n(t)$, we have
$$
\s_n(t)=-n(n+2t)+(2n+1+2\al)\bt_n-4(n+\al)\mathrm{p}(n,t).
$$
Using the fact that $\bt_n=\mathrm{p}(n,t)-\mathrm{p}(n+1,t)$, we arrive at (\ref{sig}).
\end{proof}

From the above lemma, we are ready to obtain the large $n$ expansion of $\s_n(t)$.

\begin{theorem}\label{thm1}
The quantity $\s_n(t)=2t\frac{d}{dt}\ln D_n(t)$ has the following large $n$ asymptotic expansion:
\bea\label{snt}
\s_n(t)&=&-\frac{3 t^{2/3}n^{4/3}}{2^{2/3}}-\frac{ \sqrt[3]{t} (t-2 \alpha )n^{2/3}}{\sqrt[3]{2}}-2^{4/3}t^{2/3}\al \sqrt[3]{n}+\frac{3 t^2+60t\alpha-24\alpha ^2+4}{36}\nonumber\\
&-&\frac{2^{2/3} \alpha  \sqrt[3]{t} \:(t-2 \alpha )}{3\sqrt[3]{n}}-\frac{ t^3-6   t^2\alpha+48  t\alpha ^2+2 t-8 \alpha ^3+8 \alpha}{54 \sqrt[3]{t}\:(2n)^{2/3}}+O(n^{-1}).
\eea
\end{theorem}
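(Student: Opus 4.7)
My approach is to combine the lemma established just before the theorem with the fully explicit asymptotic expansion of $\mathrm{p}(n,t)$ from the preceding theorem. The lemma yields the algebraic identity
$$\sigma_n(t) = -n(n+2t) - (2n-1+2\alpha)\mathrm{p}(n,t) - (2n+1+2\alpha)\mathrm{p}(n+1,t),$$
which reduces the asymptotic analysis of $\sigma_n(t)$ to that of $\mathrm{p}(n,t)$ and $\mathrm{p}(n+1,t)$. Since the former is already in hand, the remaining task is to shift the expansion from $n$ to $n+1$ and to collect like powers of $n^{-1/3}$.

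Concretely, I would substitute the ansatz (\ref{pne}) into both occurrences of $\mathrm{p}$, and expand each factor $(n+1)^{-j/3} = n^{-j/3}(1+1/n)^{-j/3}$ via the binomial series. Multiplying by $-(2n+1+2\alpha)$, treating $-(2n-1+2\alpha)\mathrm{p}(n,t)$ analogously, and combining with $-n(n+2t)$, the polynomial-in-$n$ contributions must cancel, as the target expansion (\ref{snt}) has no polynomial growth; this provides a useful consistency check. The leading surviving contribution comes from $b_{-1}$: the two pieces together yield $-4b_{-1}n^{4/3} = -\tfrac{3t^{2/3}}{2^{2/3}}n^{4/3}$, matching (\ref{snt}). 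All subsequent coefficients of $n^{2/3}, n^{1/3}, n^0, n^{-1/3}, n^{-2/3}$ emerge by the same mechanical collection procedure, each one being a finite linear combination of the $b_j$'s with rational coefficients coming from the binomial expansion.

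The main obstacle is purely computational bookkeeping rather than any new analytic input. Because each occurrence of $\mathrm{p}$ comes multiplied by a prefactor of order $n$, reading off the $O(n^{-2/3})$ term in $\sigma_n(t)$ requires knowing the coefficient $b_5$ of $n^{-5/3}$ in $\mathrm{p}(n,t)$, which is not among the $b_{-3},\ldots,b_4$ listed explicitly in the previous theorem. This missing coefficient is obtained with no additional difficulty by continuing the recursive scheme used there: substitute (\ref{pne}) into the difference equation (\ref{pnd}) and solve for $b_5$ from the vanishing of the appropriate power of $n$, since at that stage the coefficient equation is linear in the new unknown with all lower coefficients already determined. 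Once $b_5$ (and if desired $b_6$, for the implicit error estimate) is computed, assembling the six displayed terms of (\ref{snt}) is a routine algebraic verification.
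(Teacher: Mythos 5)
Your proposal is correct and follows essentially the same route as the paper, whose proof is exactly this: substitute the expansion (\ref{pnte}) of $\mathrm{p}(n,t)$ into the identity (\ref{sig}) and take the large $n$ limit. Your additional observation that the $O(n^{-2/3})$ term of $\sigma_n(t)$ requires the coefficient $b_5$, obtainable by continuing the recursive scheme from the difference equation (\ref{pnd}), is a correct and worthwhile detail that the paper leaves implicit.
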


\begin{proof}
Substituting (\ref{pnte}), the expansion of $\mathrm{p}(n,t)$, into (\ref{sig}), we obtain the desired result by taking a large $n$ limit.
\end{proof}
\begin{remark}
If we assume $\s_n(t)=\sum_{j=-\infty}^{4}d_jn^{j/3}$ as $n\rightarrow\infty$, then we can obtain the expansion coefficients $d_j$ from (\ref{snd}), the second-order difference equation satisfied by $\s_n(t)$. We find that this agrees precisely with the result in Theorem \ref{thm1}.
\end{remark}
Finally, we have the large $n$ asymptotic expansion of $D_n(t)$.

\begin{theorem}
The Hankel determinant $D_n(t)$ has the following expansion as $n\rightarrow\infty$:
\bea
\ln D_n(t)&=&-n^2 \ln 2-\frac{9\:  t^{2/3}n^{4/3}}{4\times2^{2/3}}-\tilde{c}_{1}(\al) n-\frac{3  \sqrt[3]{t}\: (t-8 \alpha )n^{2/3}}{8 \sqrt[3]{2}}-\frac{3 t^{2/3}\alpha\sqrt[3]{n}}{2^{2/3}} \nonumber\\
&+&\frac{(12 \alpha ^2-5) \ln n}{36}+\frac{3t^2+120t\al-8 (6 \alpha ^2-1) \ln t}{144}-\tilde{c}_0(\al)-\frac{\alpha  \sqrt[3]{t}\: (t-8 \alpha )}{4 \sqrt[3]{2n}}\nonumber\\
&-&\frac{5 t^3-48  t^2\al+40 (24 \alpha ^2+1) t+320 \alpha(\alpha ^2-1)}{1440 \sqrt[3]{t} \:(2n)^{2/3}}+O(n^{-1}),\nonumber
\eea
where $\tilde{c}_0(\al)$ and $\tilde{c}_{1}(\al)$ are constants depending on $\al$ only.
\end{theorem}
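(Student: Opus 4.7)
The plan is to integrate the identity $\sigma_n(t)=2t\,\frac{d}{dt}\ln D_n(t)$ in $t$, using the large-$n$ expansion of $\sigma_n(t)$ from Theorem \ref{thm1} to determine all $t$-dependent terms, and then to pin down the remaining $n$-dependent, $t$-independent contributions via the Coulomb-fluid free energy (\ref{fe}) together with the second-difference identity $\ln\beta_n(t)=\ln D_{n+1}(t)-2\ln D_n(t)+\ln D_{n-1}(t)$ that follows from (\ref{re}).

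First I would use (\ref{def}) to write
$$\ln D_n(t) = K_n(\alpha) + \int \frac{\sigma_n(t)}{2t}\,dt,$$
where $K_n(\alpha)$ collects all the $t$-integration constants and depends on $n$ and $\alpha$ only. Substituting the expansion (\ref{snt}) and integrating term by term in $t$ produces, at each order $n^{j/3}$, the entire $t$-dependent coefficient in the desired expansion. As a consistency check, the constant-in-$n$ piece of (\ref{snt}), namely $\frac{3t^2+60t\alpha-24\alpha^2+4}{36}$, upon division by $2t$ and integration in $t$ yields the $\frac{3t^2+120t\alpha-8(6\alpha^2-1)\ln t}{144}$ term in the statement (modulo an $\alpha$-dependent constant absorbed into $K_n(\alpha)$); similarly the leading piece $-\tfrac{3t^{2/3}n^{4/3}}{2^{2/3}}$ of $\sigma_n$ integrates to $-\tfrac{9t^{2/3}n^{4/3}}{4\cdot 2^{2/3}}$, matching the asserted leading term.

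Next I would determine the $n$-dependence of $K_n(\alpha)$ by combining two ingredients. The Dyson Coulomb-fluid identification $\ln D_n\sim -F[\sigma]$ together with (\ref{fe}) supplies the leading behavior in the powers $n^2$, $n^{4/3}$, $n$, $n^{2/3}$, $n^{1/3}$, and an $\alpha^2/3$ contribution to the coefficient of $\ln n$. To complete the expansion to the order stated in the theorem, I would then substitute an ansatz
$$\ln D_n(t)=\sum_{j\leq 4}A_j(t,\alpha)\,n^{j/3}+B(\alpha)\ln n+\tilde c_1(\alpha)\,n+\text{(subleading in $n$)}$$
into the second-difference identity, Taylor-expand $(n\pm 1)^{j/3}$ and $\ln(n\pm 1)$ about $n$, and match coefficients against $\ln\beta_n(t)$, the latter being the logarithm of the expansion of $\beta_n(t)$ from Theorem \ref{thm}. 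This procedure fixes every coefficient visible to the second difference, including the $-5/36$ correction that upgrades the Coulomb-fluid $\alpha^2/3$ to the full $(12\alpha^2-5)/36$ in front of $\ln n$.

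The coefficient of $n$ and the absolute constant are annihilated by the second-difference operator and therefore remain undetermined by this procedure; they appear in the statement as $\tilde c_1(\alpha)$ and $\tilde c_0(\alpha)$, and their $t$-independence is guaranteed by Step 1, which has already captured every $t$-dependent contribution. The main technical obstacle is the bookkeeping at fractional powers of $n$: the second-difference operator cascades each $n^{j/3}$ into lower-order pieces $n^{j/3-2k}$, so isolating a particular coefficient of $\ln\beta_n(t)$—especially the $-5/36$ correction, which has to be harvested from the $O(n^{-2})$ piece of $\ln\beta_n(t)$ together with the second difference of $B(\alpha)\ln n$ and contributions from higher $A_j(t,\alpha)\,n^{j/3}$ terms—requires simultaneous tracking of several $j$ at once. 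Once this matching is carried out to the required order, the stated expansion follows.
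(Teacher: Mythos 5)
Your proposal is correct and rests on exactly the same ingredients as the paper's proof: the Coulomb-fluid free energy (\ref{fe}) to justify the form of the ansatz (\ref{fna}), the second-difference identity $-\ln\beta_n=F_{n+1}+F_{n-1}-2F_n$ from (\ref{re}) matched against the expansion of $\beta_n$ in Theorem \ref{thm}, and the relation (\ref{def}) with the expansion (\ref{snt}) of $\sigma_n$ to control the $t$-dependence. The only difference is the order of the two steps — the paper runs the difference-equation matching first and invokes $\sigma_n$ only to pin down the $t$-dependence of the two coefficients $c_1(t,\alpha)$ and $c_0(t,\alpha)$ lying in the kernel of the second difference, whereas you integrate $\sigma_n/(2t)$ first and then use the difference equation for the $t$-independent, $n$-dependent constants (including the full $\ln n$ coefficient, which indeed emerges from the $O(n^{-2})$ term of $\ln\beta_n$) — and the two routes are equivalent.
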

\begin{proof}
Let
$$
F_n(t):=-\ln D_n(t)
$$
be the ``free energy''.
From (\ref{re}) we have
\be\label{dc}
-\ln\bt_n=F_{n+1}(t)+F_{n-1}(t)-2F_n(t).
\ee
For sufficiently large $n$, Chen and Ismail \cite{ChenIsmail} showed that $F_n(t)$ is approximated by the free energy $F[\s]$ defined in (\ref{fe1}). Taking account of (\ref{fe}),
we have the following large $n$ expansion form for $F_n(t)$:
\be\label{fna}
F_n(t)=c(t,\al)\ln n+\sum_{j=-\infty}^{6}c_{j/3}(t,\al)n^{j/3}.
\ee
Substituting (\ref{bte}) and (\ref{fna}) into the difference equation (\ref{dc}) and letting $n\rightarrow\infty$, we obtain the asymptotic expansion of $F_n(t)$ by equating coefficients of powers of $n$:
\begin{small}
\bea\label{fe2}
F_n(t)&=&n^2 \ln 2+\frac{9\:  t^{2/3}n^{4/3}}{4\times2^{2/3}}+c_{1}(t,\al) n+\frac{3  \sqrt[3]{t}\: (t-8 \alpha )n^{2/3}}{8 \sqrt[3]{2}}+\frac{3 t^{2/3}\alpha\sqrt[3]{n}}{2^{2/3}}-\frac{(12 \alpha ^2-5) \ln n}{36} \nonumber\\
&+&c_0(t,\al)+\frac{\alpha  \sqrt[3]{t}\: (t-8 \alpha )}{4 \sqrt[3]{2n}}+\frac{5 t^3-48  t^2\al+40 (24 \alpha ^2+1) t+320 \alpha(\alpha ^2-1)}{1440 \sqrt[3]{t} \:(2n)^{2/3}}+O(n^{-1}),
\eea
\end{small}
where $c_{1}(t,\al)$ and $c_0(t,\al)$ are undetermined coefficients independent of $n$.
Hence,
\begin{small}
\bea\label{dn}
\ln D_n(t)&=&-n^2 \ln 2-\frac{9\:  t^{2/3}n^{4/3}}{4\times2^{2/3}}-c_{1}(t,\al) n-\frac{3  \sqrt[3]{t}\: (t-8 \alpha )n^{2/3}}{8 \sqrt[3]{2}}-\frac{3   t^{2/3}\alpha\sqrt[3]{n}}{2^{2/3}}+\frac{(12 \alpha ^2-5) \ln n}{36} \nonumber\\
&-&c_0(t,\al)-\frac{\alpha  \sqrt[3]{t}\: (t-8 \alpha )}{4 \sqrt[3]{2n}}-\frac{5 t^3-48  t^2\al+40 (24 \alpha ^2+1) t+320 \alpha(\alpha ^2-1)}{1440 \sqrt[3]{t} \:(2n)^{2/3}}+O(n^{-1}).
\eea
\end{small}\\[-40pt]

In order to know more information of $c_{1}(t,\al)$ and $c_{0}(t,\al)$, we take a derivative with respect to $t$ in (\ref{dn}) and substitute it into (\ref{def}), to find
\bea
\s_n(t)&=&-\frac{3 t^{2/3}n^{4/3}}{2^{2/3}}-2nt\frac{d}{dt}c_{1}(t,\al)-\frac{ \sqrt[3]{t} (t-2 \alpha )n^{2/3}}{\sqrt[3]{2}}-2^{4/3}t^{2/3}\al \sqrt[3]{n}-2t\frac{d}{dt}c_0(t,\al)\nonumber\\
&-&\frac{2^{2/3} \alpha  \sqrt[3]{t} \:(t-2 \alpha )}{3\sqrt[3]{n}}-\frac{ t^3-6   t^2\alpha+48  t\alpha ^2+2 t-8 \alpha ^3+8 \alpha}{54 \sqrt[3]{t}\:(2n)^{2/3}}+O(n^{-1}).
\eea
Comparing the above with (\ref{snt}), we have
$$
\frac{d}{dt}c_{1}(t,\al)=0,
$$
$$
-2t\frac{d}{dt}c_0(t,\al)=\frac{3 t^2+60t\alpha-24\alpha ^2+4}{36}.
$$
It follows that
\be\label{c1}
c_{1}(t,\al)=\tilde{c}_{1}(\al),
\ee
\be\label{c0}
c_0(t,\al)=\frac{8 (6 \alpha ^2-1) \ln t-3 t^2-120t\al}{144}+\tilde{c}_0(\al),
\ee
where $\tilde{c}_0(\al)$ and $\tilde{c}_{1}(\al)$ are constants depending on $\al$ only.
Substituting (\ref{c1}) and (\ref{c0}) into (\ref{dn}), we establish the theorem.
\end{proof}

\begin{remark}
We can not evaluate explicitly the constants $\tilde{c}_0(\al)$ and $\tilde{c}_{1}(\al)$ with our method. However, by comparing (\ref{fe2}) with (\ref{fe}), we conjecture that $\tilde{c}_{1}(\al)=c_1(t,\al)=\al\ln 4$. For the determination of the constants in the case of regularly perturbed weights, see \cite{Basor2015} for reference.
\end{remark}

\section*{Acknowledgments}
The authors are very grateful to the editors and the anonymous reviewers for their helpful and constructive comments. The work of Chao Min was supported by the National Natural Science Foundation of China under grant number 12001212 and by the Scientific Research Funds of Huaqiao University under grant number 17BS402.
Yang Chen was supported by the Macau Science and Technology Development Fund under grant number FDCT 023/2017/A1 and by the University of Macau under grant number MYRG 2018-00125-FST.

\section*{Data Availability Statement}
Data sharing not applicable to this article as no datasets were generated or analysed during the current study.

\vspace{58pt}
\noindent School of Mathematical Sciences, Huaqiao University, Quanzhou 362021, China\\
e-mail: chaomin@hqu.edu.cn\\

\noindent Department of Mathematics, Faculty of Science and Technology, University of Macau, Macau, China\\
e-mail: yangbrookchen@yahoo.co.uk

\end{document}